\renewcommand*{\backref}[1]{}
\renewcommand*{\backrefalt}[4]{
  \ifcase #1 %
   [No citations.]%
  \or
   [#2]%
  \else
   [#2]%
  \fi
}
\long\def\@savemarbox#1#2{\global\setbox#1\vtop{\hsize\marginparwidth 
  \@parboxrestore\tiny\raggedright #2}}
\newcommand{\RR}{{\mathbb{R}}}
\def\from{\colon\thinspace}
\renewcommand{\setminus}{{\smallsetminus}}
\theoremstyle{plain}
\newtheorem{theorem}{Theorem}[section]
\newtheorem{corollary}[theorem]{Corollary}
\newtheorem{lemma}[theorem]{Lemma}
\newtheorem{problem}[theorem]{Problem}
\newtheorem*{namedtheorem}{\theoremname}
\newcommand{\theoremname}{testing}
\newenvironment{named}[1]{\renewcommand{\theoremname}{#1}\begin{namedtheorem}}{\end{namedtheorem}}
\theoremstyle{definition}
\newtheorem{definition}[theorem]{Definition}
\newtheorem{example}[theorem]{Example}
\newtheorem{remark}[theorem]{Remark}
\newtheorem{terminology}[theorem]{Terminology}
\def\zep{\epsilon}
\def\Hd{H_\Zd}
\def\Dd{D_\Zd}
\def\Hc{H_\Zc}
\def\Dc{D_\Zc}
\def\Bth{{\mathbb B}^3}
\def\Rth{{\mathbb R}^3}
\def\Rt{{\mathbb R}^2}
\def\Sth{{\mathbb S}^3}
\def\St{{\mathbb S}^2}
\def\So{{\mathbb S}^1}
\def\za{\alpha}
\def\zb{\beta}
\def\zc{\gamma}
\def\zd{\delta}
\def\zth{\theta}
\def\Zc{\Gamma}
\def\Zf{\Phi}
\def\Zd{\Delta}
\def\bd{\partial}
\def\R{\mathbb R}
\def\Z{\mathbb Z}
\def\inv{^{-1}}
\title{Bi-twist manifolds and two-bridge knots}
\author{J. W. Cannon}
\address{Department of Mathematics\\ 
Brigham Young University\\ 
Provo, UT 84602\\ U.S.A.} 
\author{W. J. Floyd}
\address{Department of Mathematics\\ 
Virginia Tech\\
Blacksburg, VA 24061\\ U.S.A.} 
\email{floyd@math.vt.edu}
\urladdr{http://www.math.vt.edu/people/floyd}
\author{L. Lambert}
\author{W. R. Parry}
\address{Department of Mathematics\\ 
Eastern Michigan University\\
Ypsilanti, MI 48197\\ U.S.A.} 
\email{wparry@emich.edu}
\author{J. S. Purcell}
\address{Department of Mathematics\\ 
Brigham Young University\\ 
Provo, UT 84602\\ U.S.A.} 
\email{jpurcell@math.byu.edu}
\urladdr{http://math.byu.edu/~jpurcell}
\begin{document}


\begin{abstract}
  We give uniform, explicit, and simple face-pairing descriptions of all the branched cyclic covers of the 3--sphere, branched over two-bridge knots. Our method is to use the bi-twisted face-pairing constructions of Cannon, Floyd, and Parry; these examples show that the bi-twist construction is often efficient and natural. Finally, we give applications to computations of fundamental groups and homology of these branched cyclic covers.
\end{abstract}

\maketitle

\section[Introduction]{Introduction}\label{sec-intro}

Branched cyclic covers of $\Sth$ have played a major role in topology, and continue to appear in a wide variety of contexts. For example, branched cyclic covers of $\Sth$ branched over two-bridge knots have recently appeared in combinatorial work bounding the Matveev complexity of a 3-manifold \cite{petronio-vesnin}, in algebraic and topological work determining relations between $L$-spaces, left-orderability, and taut foliations \cite{gordon-lidman, boyer-gordon-watson, yhu}, and in geometric work giving information on maps of character varieties \cite{nagasato-yamaguchi}. They provide a wealth of examples, and a useful collection of manifolds on which to study conjectures. Given their wide applicability, and their continued relevance, it is useful to have many explicit descriptions of these manifolds. 

In this paper, we give a new and particularly elegant construction of the branched cyclic covers of two-bridge knots, using the bi-twist construction of \cite{CFP4}. While other presentations of these manifolds are known (see, for example \cite{Min82, Mul01}), we feel our descriptions have several advantages, as follows. 

First, they follow from a recipe involving exactly the parameters necessary to describe a two-bridge knot, namely, continued fraction parameters. Our descriptions apply uniformly to all two-bridge knots, and all branched cyclic covers of $\Sth$ branched over two-bridge knots. 

Second, they are obtained from a description of a two-bridge knot using a particularly straight-forward bi-twisted face pairing construction, as in \cite{CFP1,CFP2,CFP3,CFP4}. Bi-twisted face-pairings are known to produce all closed orientable 3-manifolds. The examples of this paper show, in addition, that bi-twist constructions are often efficient and natural. While a generic face-pairing will yield a pseudomanifold, which, with probability 1, will not be an actual manifold~\cite{Dun06}, bi-twisted face-pairings avoid this problem. 
In this paper, we will review necessary information on bi-twisted face pairings, so no prior specialized knowledge is required to understand our constructions. 

Third, our description leads to immediate consequences in geometric group theory. We obtain a simple proof of the fact that the fundamental group of the $n$-fold branched cyclic covering of $\Sth$, branched over a two-bridge knot, has a cyclic presentation. Our description also gives immediate presentations of two well known families of groups, the Fibonacci and Sieradski groups. These are known to arise as fundamental groups of branched cyclic covers of $\Sth$ branched over the figure-eight and trefoil knots, respectively. These groups have received considerable attention from geometric group theorists; see, for example~\cite{Cav98} for further references, and Section~\ref{sec-history} for more history.
Our methods recover the fact that the first homology groups of Sieradski manifolds are periodic. We also give a proof that their fundamental groups are distinct using Milnor's characterization of these spaces. We consider orders of abelianizations of Fibonacci groups, as well. These orders form an interesting sequence related to the Fibonacci sequence, which we shall see.

\subsection{Bi-twisted face-pairing description}\label{sec-facepairing}

We will see that the bi-twist description of any two-bridge knot is encoded as the image of the north-south axis in a ball labeled as in Figure~\ref{fig-tbf1}, along with an associated vector of integer multipliers. For the branched cyclic cover, the description is encoded by adding additional longitudinal arcs to the sphere.
We now describe the construction briefly, in order to state the main results of the paper. A more detailed description of the construction, with examples, is given in Section~\ref{sec-trefoil}.

\begin{figure}[h!]
\import{figures/}{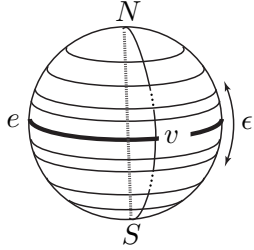}
\caption{The model face-pairing: a faceted 3-ball with dotted central axis and reflection face-pairing $\zep\from\partial B^3\to \partial B^3.$}
\label{fig-tbf1}
\end{figure}

Begin with a finite graph $\Zc$ in the $2$-sphere $\St = \bd \Bth$ that is the union of the equator $e$, one longitude $NS$ from the north pole $N$ to the south pole $S$, and $2k \ge 0$ latitudinal circles, such that $\Zc$ is invariant under reflection $\zep\from\St \to \St$ in the equator. Then $\Zc$ divides $\St$ into $2(k+1)$ faces that are paired by $\zep$. This face-pairing is shown in Figure~\ref{fig-tbf1}.

As with any face-pairing, the edges fall into edge cycles. The equator $e$ forms one edge cycle $c_0$ since the reflection $\zep$ leaves $e$ invariant. Each other edge of the graph is matched with its reflection to form another edge cycle $c_i$. We number these edge cycles from $0$ through $2k+1$, with even numbers associated with latitudinal edges, as indicated in Figure~\ref{fig-tbf2}.

\begin{figure}[h!]
\import{figures/}{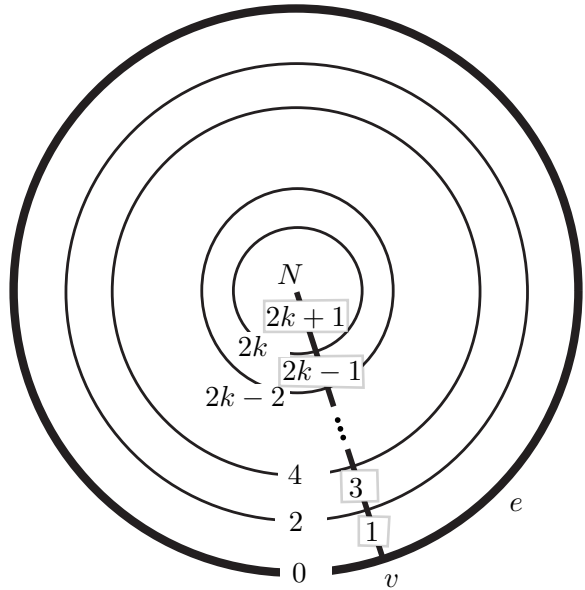}
\caption{The northern hemisphere, with edge cycles numbered.}
\label{fig-tbf2}
\end{figure}

Now choose nonzero integer \emph{multipliers}, denoted $m_0$, $m_1$, $\ldots$, $m_{2k}$, $m_{2k+1}$, for the edge cycles $c_i$. In the case at hand, restrict the choice of multipliers $m_i$ as follows. Each latitudinal edge cycle $c_{2i}$ is assigned either $+1$ or $-1$ as multiplier. Each longitudinal edge cycle $c_{2i+1}$ may be assigned any integer multiplier $m_{2i+1}$ whatsoever, including $0$. The multiplier $m_{2i+1} =0$ is usually forbidden, but in this case indicates that the two edges of edge-class $c_{2i+1}$ must be collapsed to a point before the bi-twist construction is engaged.

Finally, for the general bi-twist construction, we obtain a closed manifold $M(\zep, m)$ by taking the following quotient. First, subdivide each edge in the edge cycle $c_i$ into $|c_i|\cdot|m_i|$ sub-edges. Insert an additional edge between each adjacent positive and negative edge, if any. Then twist each sub-edge by one sub-edge in a direction indicated by the sign of $m_i$. Finally, apply the face-pairing map $\zep$ to glue bi-twisted faces. This is the bi-twist construction.

In Theorem~\ref{thm-qisS3}, we prove that the bi-twist manifold $M(\zep,m)$ described above is the $3$-sphere $\Sth$. The image of the north--south axis in $\Sth$ is a two-bridge knot.
In fact, we prove more. Recall that every two-bridge knot is the closure of a rational tangle. See~\cite{Kau02} for an elementary exposition. A rational tangle is determined up to isotopy by a single rational number, which we call the rational number invariant of the tangle. There are two natural ways to close a tangle so that it becomes a knot or link, the numerator closure and the denominator closure. The full statement of Theorem~\ref{thm-qisS3} is below.

\begin{named}{Theorem~\ref{thm-qisS3}}
The bi-twist manifold $M(\zep,m)$ is the $3$-sphere $\Sth$. The image of the north--south axis in $\Sth$ is the two-bridge knot which is the numerator closure of the tangle $T(a/b)$ whose rational number invariant $a/b$ is
\[
2\cdot m_0+\cfrac{1}{2\cdot m_1+\cfrac{1}{\cdots+\cfrac{1}{2\cdot
m_{2k}+\cfrac{1}{2\cdot m_{2k+1}}}}}.
\]
\end{named}

\begin{remark}
The $2$'s in the continued fraction indicate that the tangle is constructed using only full twists instead of the possible mixture of full and half twists. 
\end{remark}

\begin{example}
The simplest case, with only equator and longitude, yields the trefoil and figure-eight knots, as we shall see in Theorem~\ref{thm:trefoil}. Simple subdivisions yield their branched cyclic covers, the Sieradski \cite{Sie86} and Fibonacci \cite{Ves96b} manifolds.
\end{example}

\begin{definition}\label{def-normlized}
We say that the multiplier function $m$ is \emph{normalized} if the following hold:
\begin{enumerate}
\item $m_{2k+1}\ne 0$, and
\item if $m_{2i+1}=0$ for some $i\in\{0,\ldots,k-1\}$, then
$m_{2i}=m_{2i+2}$.
\end{enumerate}
\end{definition}

With this definition, the previous theorem and well-known results involving two-bridge knots yield the following corollary.

\begin{named}{Corollary~\ref{cor-denomclosure}}
Every normalized multiplier function yields a nontrivial two-bridge knot.  Conversely, every nontrivial two-bridge knot $K$ is realized by either one or two normalized multiplier functions.  If $K$ is the numerator closure of the tangle $T(a/b)$, then it has exactly one such realization if and only if $b^2\equiv 1 \text{ mod } a.$
\end{named}

Notice that the $n$-th branched cyclic covering of $\Sth$, branched over $K$, can be obtained by unwinding the description $n$ times about the unknotted axis that represents $K$, unwinding the initial face-pairing as in Figure~\ref{fig-tbf22}. This has the following application.

\begin{named}{Theorem~\ref{thm-Gn}}
The fundamental group of the $n$-th branched cyclic covering of $\Sth$, branched over $K$, has a cyclic presentation.
\end{named}

\begin{problem}\label{prob:knots}
How should one carry out the analogous construction for arbitrary knots?
\end{problem}

\subsection{The Fibonacci and Sieradski manifolds}\label{sec-Fibonacci}

Since the knots in the face-pairing description
appear as the unknotted axis in $\Bth$, it is easy to unwind $\Bth$ around the axis to obtain face pairings for the branched cyclic coverings of $\Sth$, branched over the trefoil knot and the figure-eight knot. For the trefoil knot, the $n$-th branched cyclic cover $S_n$ is called the $n$-th Sieradski manifold. For the figure-eight knot, the $n$-th branched cyclic cover $F_n$ is called the $n$-th Fibonacci manifold.

We will prove:

\begin{named}{Theorem~\ref{thm:FnSnPresentation}}
The fundamental group $\pi_1(F_n)$ is the $2n$-th Fibonacci group with presentation
\[
\langle x_1,\ldots,x_{2n}\mid x_1x_2=x_3,\,x_2x_3=x_4,\,\ldots,\,x_{2n-1}x_{2n}=x_1,\,x_{2n}x_1=x_2\rangle.
\]
The fundamental group $\pi_1(S_n)$ is the $n$th Sieradski group with presentation
\[
\langle y_1,\ldots,y_n\mid y_1 = y_2y_n,\,y_2 =
y_3y_1,\,y_3=y_4y_2,\,\ldots,\,y_n = y_1y_{n-1}\rangle.
\]
\end{named}

\begin{remark}
The group presentations are well-known once the manifolds are recognized as branched cyclic covers of $\Sth$, branched over the figure-eight knot and the trefoil knot. But these group presentations also follow immediately from the description of the bi-twist face-pairings, as we shall see.
\end{remark}

The first homology of the Sieradski manifolds has an intriguing periodicity property, which is well-known (see 
for example Rolfsen~\cite{Rol76}). In particular, it is periodic of period $6$.
The following theorem, concerning their fundamental groups, is not as well known; it is difficult to find in the literature. We give a proof using Milnor's characterization of these spaces. 

\begin{named}{Theorem~\ref{thm:siergps}}
No two of the Sieradski groups are isomorphic.  Hence no two of the branched cyclic covers of $\Sth$, branched over the trefoil knot, are homeomorphic.
\end{named}

\subsection{Organization}
This paper is organized as follows. In Section~\ref{sec-trefoil}, we will give a more careful description of the bi-twisted face-pairing, and work through the description for two examples, which will correspond to the trefoil and figure-eight knots. 

In Section~\ref{sec-Heegaard}, we recall many of the results in our previous papers \cite{CFP3, CFP4} to make explicit the connections between face-pairings, Heegaard splittings, and surgery descriptions of 3-manifolds. We apply these to the examples of bi-twisted face pairings given here, to give surgery descriptions. We use these descriptions in Section~\ref{sec-twobridge} to prove that our constructions yield two-bridge knots. The proofs of the main geometric theorems are given in this section.

In Section~\ref{sec-cyclicpresentations} we turn to geometric group theory. We prove that our presentations easily lead to well-known results on presentations of fundamental groups. We also give results on Fibonacci and Sieradski groups in this section.

Finally, Section~\ref{sec-history} explains some of the history of these problems. 

\subsection{Acknowledgements}
 We thank the referees of an earlier version of this paper for numerous helpful comments. Purcell is partially supported by NSF grant DMS-1252687.

\subsection{Dedication}
Though LeeR Lambert spent his life as an actuary and a musician and was a loving father of nine girls and one boy, LeeR had always wanted to earn an advanced degree as a mathematician. With the encouragement of his wife, he earned his Ph.D.\ in mathematics at the age of 68. Many of the results of this paper appeared in his BYU Ph.D.\ dissertation. At the age of 71, LeeR died of bone cancer. We miss you, LeeR.

\section{Bi-twisted face-pairing: trefoil and figure-eight knots}\label{sec-trefoil}

In this section we step through the bi-twisted face-pairing description more carefully. We believe it will be most useful to work through a pair of examples. We will see in subsequent sections that these examples lead to Fibonacci and Sieradski manifolds.

As an example, consider the simplest model, shown in Figure~\ref{fig-tbf3}(a). The graph has three edges and three vertices, and divides the sphere into two singular ``triangles'', which are then matched by reflection $\zep$ in the equator $e$.

\begin{figure}
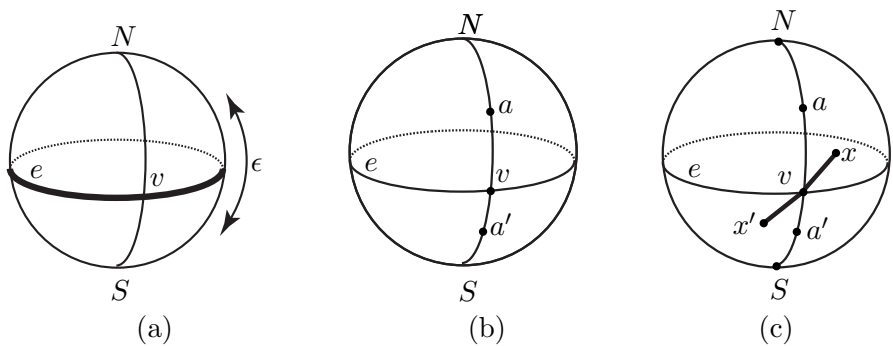

 \begin{tabular}{ccc}
\def\svgwidth{1.6in}
\import{figures/}{tbf3-modelSimple.pdf_tex} &
\def\svgwidth{1.2in}
\import{figures/}{tbf4-M+.pdf_tex} \hspace{.25in} &
\import{figures/}{tbf4-M-.pdf_tex} \\
(a) & (b) & (c)
  \end{tabular}
\caption{(a) A faceted 3-ball with vertices $N$, $v$, and $S$ and edges $Nv$, $Sv$, and $e$. (b) Subdivisions for $M_+$. (c) Subdivisions for $M_-$.}
\label{fig-tbf3}
\end{figure}

Bi-twisted face-pairings require 
an integer multiplier for each edge cycle. For this simple model there are two edge cycles, namely the singleton $c_0 = \{e\}$ 
and the pair $c_1 = \{Nv,\,Sv\}$.
We will see that multiplying every multiplier by $-1$ takes the knot which we construct to its mirror image.  So up to taking mirror images, the two simplest choices for multipliers are $m_0 = \pm1$ for $c_0$ and $m_1 = 1$ for $c_1$. The bi-twist construction requires that each edge in the cycle $c_i$ be subdivided into $|c_i|\cdot |m_i|$ subedges. When both positive and negative multipliers appear on edges of the same face, we must insert an additional edge, called a \emph{sticker}, between a negative and positive edge in a given, fixed orientation of $\St$. We will use the clockwise orientation.

With the facets modified as described in the previous paragraph, we are prepared for the bi-twisting.
Twist each subedge of each face by one subedge before applying the model map $\zep$. Edges with positive multiplier are twisted in the direction of the fixed orientation. Edges with negative multiplier are twisted in the opposite direction. The stickers resolve the twisting conflict between negative and positive subedges. A sticker in the domain of the map splits into two subedges. A sticker in the range of the map absorbs the folding together of two subedges.

We denote by $M_+$ the face pairing in which both multipliers are $+1$ and by $M_-$ the face pairing where one multiplier is $+1$ and the other is $-1$. The two results are shown in Figure~\ref{fig-tbf3}(b) and (c).

After this subdivision, the faces can be considered to have $5$ edges for $M_+$ and $7$ edges for $M_-$. Before making the identification of the northern face with the southern face, we rotate the $5$-gon one notch (= one edge = one fifth of a turn, combinatorially) in the direction of the given orientation on $\St$ before identification. We rotate the edges of the $7$-gon with positive multiplier one notch (= one edge = one seventh of a turn, combinatorially) in the direction of the orientation before identification.  The edges with negative multiplier are twisted one notch in the opposite direction.  The stickers absorb the conflict at the joint between positive and negative.
Thus the face-pairings $\zep_+$ and $\zep_-$ in terms of the edges forming the boundaries of the faces are given as follows.

For $M_+$:
\[
\zep_+:\begin{pmatrix} av&e &va&aN&Na\\
                                e&va'&a'S&Sa'&a'v
    \end{pmatrix}
\]
      
For $M_-$:
\[ \zep_-:\begin{pmatrix} av&e   &vx&xv &va &aN&Na\\
  vx'&x'v&e  &va'&a'S&Sa'&a'v
    \end{pmatrix}
\]

The bi-twist theorem \cite[Theorem 3.1]{CFP4} implies that the resulting  identification spaces are closed manifolds, which we denote by $F_1$ for $M_+$ and $S_1$ for $M_-$. We shall see that both of these manifolds are $\Sth$, and thus topologically uninteresting. But as face-pairings, these identifications are wonderfully interesting because the north-south axis from $\Bth$ becomes the figure-eight knot $K_+$ in $F_1$ and becomes the trefoil knot $K_-$ in $S_1$. We prove this in Theorem~\ref{thm:trefoil}.

\section[Heegaard splittings]{Pseudo-Heegaard splittings and
surgery diagrams}\label{sec-Heegaard}

In order to recognize the quotients of $\Bth$ described in Section~\ref{sec-intro} as the $3$-sphere and to recognize the images of the
north-south axis as two-bridge knots,  we need to make more explicit the connections between face-pairings, Heegaard splittings, and surgery descriptions of $3$-manifolds, described in our previous papers~\cite{CFP3,CFP4}. We use this description to transfer knots from the face-pairing description to the surgery descriptions.

\subsection{The pseudo-Heegaard splitting}
We begin with the following information:

\begin{description}\setlength{\itemsep}{.05in}
\item[$B$] a faceted $3$-ball which we identify with $\Bth = [0,1]\cdot \St$ ($\cdot$ = scalar multiplication).
\item[$\Zc\subset \bd B = \St$] the $1$-skeleton of $B$, a connected, finite graph with at least one edge.
\item[$\Zd$] the dual $1$-skeleton, consisting of a cone from the center $0$ of $B$ to points of $\bd B$, one in the interior of each face of $B$.
\item[$N$] a regular neighborhood of $\Zc$ in $\bd B$.
\item[{$N_\Zc = [3/4, 1]\cdot N$}] a regular neighborhood of $\Zc$ in $B$. 
\item[$N_\Zd = \hbox{cl}(B - N_\Zc)$] a regular neighborhood of $\Zd$ in $B$.
\end{description}

Add extra structure to $N$ and $N_\Zc$ as follows.

First, from each vertex $v$ of $\Zc$, we extend arcs from $v$ to $\bd N$, one to each local side of $\Zc$ at $v$ so that the interiors of these arcs are mutually disjoint. Label these arcs \emph{red}. Figure~\ref{fig-tbf6} shows this for the simplest model described above.

\begin{figure}[ht]
\import{figures/}{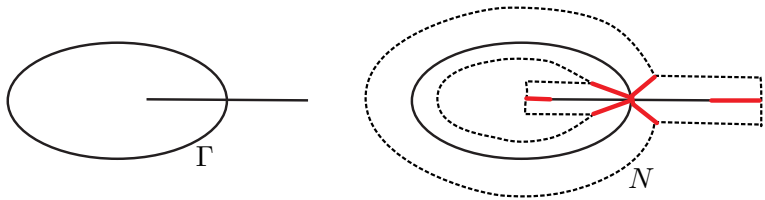}
\caption{The addition of new red arcs.}
\label{fig-tbf6}
\end{figure}

Next, momentarily disregarding both the vertices and edges of $\Zc$, we view the red arcs as subdividing $N$ into quadrilaterals (occasionally singular at the arc ends), every quadrilateral having two sides in $\partial N$ and two sides each of which is the union of two (or one in the singular case) of these red arcs, as on the left of Figure~\ref{fig-tbf7}.

Every such quadrilateral contains exactly one edge of $\Zc$.  We cut these quadrilaterals into half-quadrilaterals by arcs transverse to the corresponding edge of $\Zc$ at the middle of that edge. Label these transverse arcs \emph{blue}. For the simplest model, this is shown in Figure~\ref{fig-tbf7}, right.

\begin{figure}[ht]
\centerline{\includegraphics{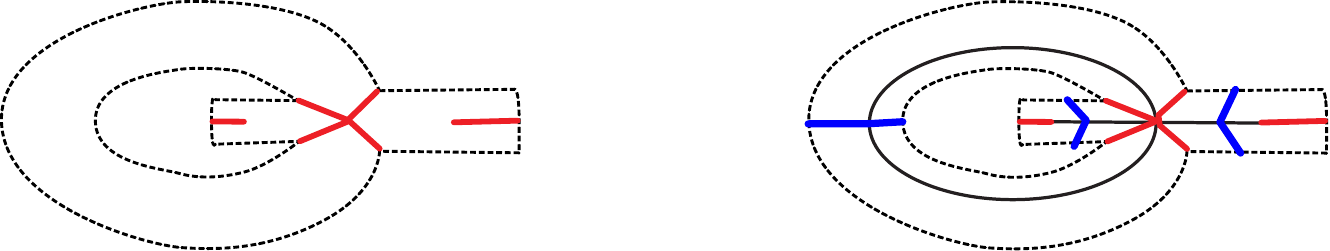}}
\caption{The addition of blue transverse arcs.}
\label{fig-tbf7}
\end{figure}

If we cut $N$ along the new red arcs and blue transverse arcs, multiply by the scalar interval $[3/4,1]$, and desingularize, we obtain cubes, each containing exactly one vertex of $\Zc$ in its boundary. Endow these cubes with a cone structure, coned to its vertex in $\Zc$. See Figure~\ref{fig-tbf8}.

\begin{figure}[ht]
  \centerline{\includegraphics{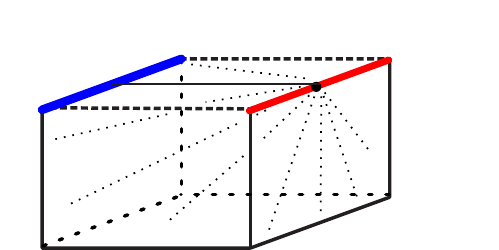}}
\caption{The cone structure.}
\label{fig-tbf8}
\end{figure}

Finally, we assume that $\zep\from\bd B \to \bd B$ is an orientation-reversing face-pairing, based on the faceted $3$-ball $B$, that respects all of this structure as much as possible: faces are paired, $N$ is invariant under the pairing, the regions bounded by the new arcs, the transverse arcs, the boundary of $N$, and $\Zc$ are paired by $\zep$, and cone structures are preserved.

\begin{definition}\label{def:handle-disks}
Let $C_\Zc$ be the union of the products of the transverse arcs with $[3/4,1]$. Let $C_\Zd = N_\Zd \cap (\bd B)$. Define $D_\Zc = C_\Zc/\zep$, $D_\Zd = C_\Zd/\zep$, $H_\Zc = N_\Zc/\zep$, and $H_\Zd = N_\Zd/\zep$, and let $\zd = \bd D_\Delta$ and $\zc = \bd D_\Gamma$.
\end{definition}

The following is essentially contained in \cite[Theorem~4.2.1]{CFP3}.

\begin{theorem}
The space $H_\Zd$ is a handlebody with one handle for each face pair of $B$. The set $D_\Zd$ is a disjoint union of disks that form a complete set of handle disks for $H_\Zd$; the curves $\zd$ form a complete set of handle curves.

The space $H_\Zc$ is a handlebody if and only if $M(\zep) = B/\zep$ is a $3$-manifold. In that case, $D_\Zc$ is a disjoint union of disks that form a complete set of handle disks for $H_\Zc$ and $\zc$ form a complete set of handle curves. Whether $M(\zep)$ is a manifold or not, the disks of $D_\Zc$
cut $H_\Zc$ into pieces $X_i$, each containing exactly one vertex $v_i$ of $M(\zep)$, and each $X_i$ is a cone $v_iS_i$, where $S_i$ is a
closed orientable surface. The space $M(\zep)$ is a manifold if and
only if each $S_i$ is a $2$-sphere. (The cone structure on $X_i$ uses
the cone structures of the pieces described above.)
\end{theorem}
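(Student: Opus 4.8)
This statement is essentially \cite[Theorem~4.2.1]{CFP3}; the plan is to recall that argument, organized around the three spaces $\Hd$, $X_i$, and $\Hc$.

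\emph{The handlebody $\Hd$.} First I would observe that $N_\Zd$ is a $3$-ball: it is a regular neighborhood in $B\cong\Bth$ of the dual spine $\Zd$, which is the cone from the center $0$ of $B$ to one interior point of each face, hence a finite tree, hence collapsible, so its regular neighborhood is a ball. The only identifications involved in forming $\Hd=N_\Zd/\zep$ take place on $C_\Zd=N_\Zd\cap\bd B$, which is the complement in the $2$-sphere $\bd B$ of the open regular neighborhood of $\Zc$, hence a disjoint union of disks, one per face of $B$, paired by $\zep$. Thus $\Hd$ is obtained from a $3$-ball by gluing its boundary sphere to itself along a disjoint pairing of subdisks, which yields a handlebody with one $1$-handle per pair, i.e.\ one per face pair of $B$. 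The image disks $\Dd=C_\Zd/\zep$ are the cocores of these handles; cutting $\Hd$ along $\Dd$ exactly undoes the gluings and returns the ball $N_\Zd$, so $\Dd$ is a complete set of handle disks and $\zd=\bd\Dd$ a complete set of handle curves.

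\emph{The cone pieces $X_i$.} Here I would use that the red arcs and the blue transverse arcs, crossed with $[3/4,1]$ and desingularized, cut $N_\Zc$ into cubes --- one per local corner of $\Zc$ --- each carrying the indicated cone structure to its vertex of $\Zc$ (Figure~\ref{fig-tbf8}). Because $\zep$ respects this structure by hypothesis, it permutes these cubes compatibly with their cone structures and permutes the vertices of $\Zc$ according to the vertex cycles, each of which determines one vertex $v_i$ of $M(\zep)$. The squares $C_\Zc$ are precisely the walls separating cubes belonging to distinct edges of $\Zc$, so cutting $\Hc$ along the images $\Dc$ separates $\Hc$ into the unions $X_i$ of the (images of the) cubes whose $\Zc$-vertex lies in the $i$-th vertex cycle. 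Each $X_i$ is assembled from cones on the single point $v_i$ glued only along faces missing $v_i$, so it is itself a cone from $v_i$ on its link $S_i$; since cutting a complex along a properly embedded disk system leaves a space whose boundary is still a closed surface, $S_i=\bd X_i$ is a closed surface, and it is orientable because $B$ is oriented and $\zep$ reverses orientation, whence $M(\zep)$ --- and thus each $X_i$ and its boundary --- is orientable.

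\emph{The manifold criterion.} A point of $M(\zep)$ in the interior of $B$, of a face, or of an edge always has a ball neighborhood (for an edge because traversing the edge cycle glues finitely many dihedral wedges cyclically into a solid cylinder). So $M(\zep)$ can fail to be a manifold only at a vertex $v_i$, and the link of $v_i$ in $M(\zep)$ is exactly $S_i$: a small enough neighborhood in $B$ of each vertex in the $i$-th vertex cycle lies inside $N_\Zc$, the cutting disks $\Dc$ avoid the vertices, and so a neighborhood of $v_i$ in $M(\zep)$ agrees with a neighborhood of $v_i$ in $X_i=v_i\ast S_i$. Hence $M(\zep)$ is a manifold iff every $S_i\cong\St$. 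If so, each $X_i=v_i\ast\St$ is a $3$-ball, so cutting the connected space $\Hc$ along $\Dc$ produces a disjoint union of balls, which makes $\Hc$ a handlebody with $\Dc$ a complete set of handle disks and $\zc=\bd\Dc$ a complete set of handle curves. Conversely, if $\Hc$ is a handlebody, then since $\bd\Hc=\bd\Hd$ is a closed surface and $\Hd$ is always a handlebody, $M(\zep)=\Hc\cup_\partial\Hd$ is a union of two handlebodies along their common boundary, hence a closed $3$-manifold. The delicate step is the middle one: verifying that each assembled $X_i$ is genuinely a cone $v_i\ast S_i$ (not merely that it has surface link), that $\Dc$ is embedded with complement exactly $\bigsqcup_i X_i$, and that $\Dc$ and $\zc$ really consist of disjoint disks and curves. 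This requires careful bookkeeping of how $\zep$ acts on the red--blue subdivision of $N_\Zc$ and is the technical heart of \cite{CFP3}; in the write-up I would import the local models of Figures~\ref{fig-tbf7}--\ref{fig-tbf8} and that reference rather than rederive it.
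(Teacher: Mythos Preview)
The paper does not prove this theorem at all; it simply states that the result ``is essentially contained in \cite[Theorem~4.2.1]{CFP3}'' and moves on. Your proposal therefore goes well beyond what the paper does, supplying an actual sketch of the CFP3 argument organized around $\Hd$, the cone pieces $X_i$, and the manifold criterion. The sketch is sound in outline and you correctly flag that the delicate bookkeeping --- that $\Dc$ is a disjoint union of embedded disks, and that the assembled $X_i$ is genuinely a cone $v_i\ast S_i$ --- is the technical heart that must be imported from \cite{CFP3}.

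One small circularity to clean up: your justification that $S_i$ is a closed surface (``cutting a complex along a properly embedded disk system leaves a space whose boundary is still a closed surface'') presupposes that $\Hc$ is a manifold with boundary, which is precisely what is in question. The honest argument is local: each cube is the cone from its $\Zc$-vertex over the three faces not containing that vertex (bottom, transverse wall, side wall), which together form a hexagonal disk; the link $S_i$ is assembled from these hexagons, glued two-to-one along edges because each gluing face of a cube meets exactly one other cube, so $S_i$ is a closed surface directly from the combinatorics. Orientability then follows as you say. This is exactly the kind of local-model verification you already said you would import from \cite{CFP3}, so the fix is just to rephrase that sentence.
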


\begin{terminology}
Even when $M(\zep) = B/\zep$ is not a manifold, we call the disks of $D_\Zc$ handle disks for $H_\Zc$ and the curves $\zc = \bd D_\Zc$ handle curves for $H_\Zc$. We call $H_\Zc$ a pseudo-handlebody and the pair $(H_\Zc,H_\Zd)$ a pseudo-Heegaard splitting for $M(\zep)$.
\end{terminology}

All bi-twist manifolds based on the face-pairing $(B,\zep)$ have Heegaard splittings and surgery descriptions that can be based on any unknotted embedding of $H_\Zd = N_\Zd/\zep$ in $\Sth = \Rth \cup \{\infty\}$. The closure of the complement is then also a handlebody, which we shall denote by $H$. We describe here a particular unknotted embedding of $\Hd$ in $\Sth$, and illustrate with the constructions from Section~\ref{sec-facepairing}, especially those of Section~\ref{sec-trefoil}.

Note that $N_\Zd = ([0,3/4]\cdot\St)\cup([3/4,1]\cdot C_\Zd)$, where $[0,3/4]\cdot \St$ is, of course, a $3$-ball, and $[3/4,1]\cdot C_\Zd$ is a family of chimneys attached to that $3$-ball, as in Figure~\ref{fig-tbf9}(a).

\begin{figure}[ht]
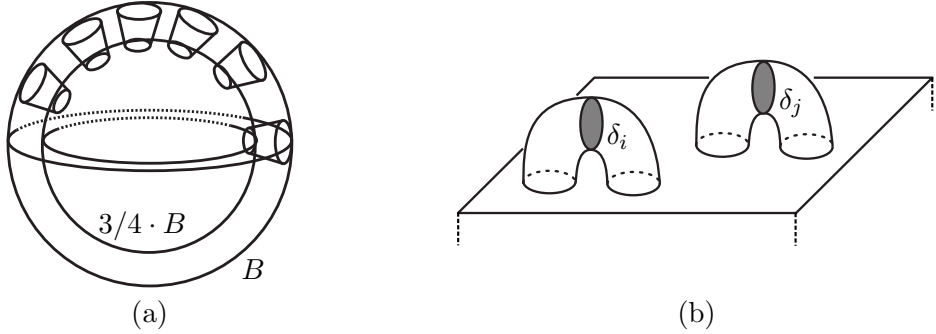

  \begin{center}
\begin{tabular}{cc}
  \import{figures/}{tbf9-Chimneys.pdf_tex} &
  \import{figures/}{tbf10-Handlebody.pdf_tex} \\
  (a) & (b)
\end{tabular}
  \end{center}
\caption{(a) The ball with chimneys $N_\Zd$. (b) The handlebody $H_\Zd$.}
\label{fig-tbf9}
\end{figure}

The space $H_\Zd$ is formed by identifying the tops of those chimneys in pairs. We may therefore assume $H_\Zd$ is embedded in $\Sth = \Rth \cup \{\infty\}$ as shown in Figure~\ref{fig-tbf9}(b).  We identify $[0,3/4)\cdot \St$ with $\Rt\times (-\infty,0)\subset \Rth$.  The 2-sphere $(3/4)\cdot \St$ minus one point is identified with $\Rt\times \{0\}\subset \Rth$.  The chimneys with tops identified become handles.

\subsection{Pseudo-Heegaard splittings of our examples}
For the constructions of Section~\ref{sec-facepairing} and~\ref{sec-trefoil}, we now determine the curves $\zd$ and $\zc$ on the handlebody $H_\Zd$.

Begin with the simple face-pairing description of Section~\ref{sec-trefoil}. The handlebody $H_\Zd$ is embedded in $\RR^3\cup\{\infty\}$ as above, with the plane $\RR^2\times\{0\}$ identified with $(3/4)\cdot\Sth$ minus a point. Sketch the graph $(3/4)\cdot \Gamma$ on $\RR^2\times\{0\}$, with the vertex $(3/4)\cdot v$ at $\infty$, as in Figure~\ref{fig:HTrefoil}, left. There is just one pair of faces, hence just one handle in this case, as shown. Thus $D_\Zd$ is a single disk with boundary $\zd$, shown in the figure, left.

\begin{figure}
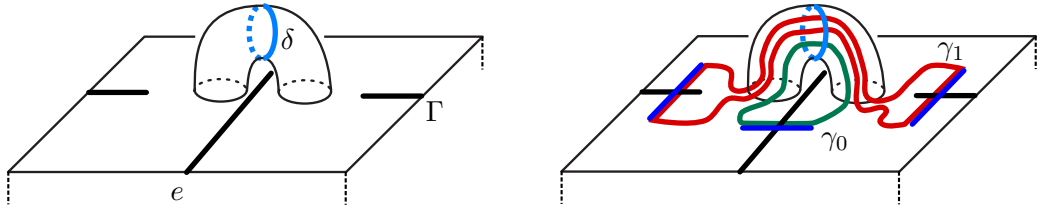

  \import{figures/}{tbf11-TrefoilDelta.pdf_tex}
  \hspace{.3in}
  \import{figures/}{tbf11-TrefoilGamma.pdf_tex}
  \caption{Left: The graph $(3/4)\cdot\Gamma$ and curve $\delta$ for the simple example. Right: Curves $\gamma$ added in, running partly along blue transverse arcs.}
  \label{fig:HTrefoil}
\end{figure}

We need to determine the curves $\zc = \bd D_\Zc$. Recall that $D_\Zc = C_\Zc/\zep$, and the disks $C_\Zc$ consist of the union of the products of the blue transverse arcs with $[3/4, 1]$. Thus curves in $\zc$ will contain blue transverse arcs, as well as arcs along the handles of $H_\Zd$, running from the blue transverse arcs to a curve $\delta_j$.

In the case of the simple example, following the action of $\zep$, we see that the transverse arc $\tau_0$ dual to the edge $e$ gives a single simple closed curve $\gamma_0$ that follows $\tau_0$, then connects the endpoints of $\tau_0$ via an arc that runds over the single handle of $H_\Zd$. The two transverse arcs dual to $Nv$ and $Sv$ are identified by $\zep$. Thus endpoints of these arcs are connected by arcs running over the handle. We obtain a simple closed curve $\gamma_1$. This is shown in Figure~\ref{fig:HTrefoil}, right.

The general picture, for the construction of Section~\ref{sec-facepairing}, follows similarly. We summarize in a lemma.

\begin{lemma}\label{lemma:HandleCurves}
Let $\Zc$ and $\zep$ be as in subsection~\ref{sec-facepairing}, with $\Zc$ the union of the equator $e$, one longitude $NS$ from the north pole $N$ to the south pole $S$, and $2k\geq 0$ latitudinal circles, such that $\Zc$ is invariant under reflection $\zep$ in the equator. Then the handle curves on $H_\Zd$ for this face pairing are as follows.
\begin{enumerate}
\item There are $k+1$ handles of $H_\Zd$, corresponding to the $k+1$ regions in the complement of $\Gamma$ in the northern hemisphere, each running from the region to its mirror region in the southern hemisphere. These give curves $\delta_0, \dots, \delta_k$ encircling the handles.
\item The transverse arc dual to the edge $e$ gives a curve $\gamma_0$ with endpoints connecting to itself over the handle corresponding to the faces on either side of $e$, which are identified by $\zep$.
\item Each latitudinal arc distinct from $e$, if any, is joined to its mirror over two handles, one for each face on opposite sides of the latitudinal edge. These give curves $\gamma_{2i}$, $i=1, \dots, k$, with index corresponding to the edge label as in Figure~\ref{fig-tbf2}.
\item Each transverse arc dual to a longitudinal arc is joined to its mirror over a handle corresponding to the region on either side of that arc. These give curves $\gamma_{2i+1}$, $i=0, \dots, k$, again with index corresponding to edge label as in Figure~\ref{fig-tbf2}.
\end{enumerate}
\end{lemma}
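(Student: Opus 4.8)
The plan is to follow the worked simple example verbatim, but keep track of the combinatorics of the graph $\Gamma$ for general $k$. First I would fix the embedding of $H_\Zd$ in $\Sth = \Rth \cup \{\infty\}$ exactly as above: identify $[0,3/4)\cdot\St$ with $\Rt\times(-\infty,0)$, put the equatorial $2$-sphere $(3/4)\cdot\St$ minus a point onto $\Rt\times\{0\}$, and draw the rescaled graph $(3/4)\cdot\Gamma$ in that plane with the vertex $(3/4)\cdot v$ at $\infty$. Since $\Gamma$ consists of the equator $e$, the longitude $NS$, and $2k$ latitudinal circles symmetric under $\zep$, its complement in the northern hemisphere has exactly $k+1$ faces (one polar disk and $k$ annular bands, each of which the longitude cuts into a single disk). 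Each such face is paired by $\zep$ with its mirror in the southern hemisphere, so the faceted ball has $k+1$ face-pairs and hence, by the theorem quoted above, $H_\Zd = N_\Zd/\zep$ is a handlebody with $k+1$ handles. Identifying the top of each chimney $[3/4,1]\cdot c$ (for $c$ a northern face) with the top of its mirror chimney yields the $k+1$ handles; the meridian $\delta_j$ of the $j$-th handle is the boundary of the corresponding disk of $D_\Zd$, encircling the chimney. This proves (1).

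Next I would analyze $\zc = \bd D_\Zc$. Recall $D_\Zc = C_\Zc/\zep$ where $C_\Zc$ is the union of the products (blue transverse arc) $\times\, [3/4,1]$, one such disk for each edge of $\Gamma$; its boundary consists of the transverse arc itself (sitting on $(3/4)\cdot\St \subset \partial H_\Zd$) together with two vertical arcs along the chimneys over the two faces adjacent to that edge, running up to a meridian $\delta_j$. The content of (2)--(4) is simply to record, for each edge of $\Gamma$, which faces lie on its two sides and how $\zep$ identifies everything.

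For (2): the equator $e$ is the unique edge fixed (as a set) by $\zep$, and the two faces adjacent to $e$ — a northern face and its mirror — are swapped by $\zep$, so they lie on a single handle; the two vertical arcs of $\partial C_e$ thus both run up the same chimney and are identified by $\zep$ to a single arc over that handle. Closing up $\tau_0$ by this arc gives the single simple closed curve $\gamma_0$, exactly as in the simple example. For (3): a latitudinal edge $\ell \neq e$ lies between two distinct northern faces (consecutive bands, or the polar disk and the first band), so $\partial C_\ell$ runs up two different chimneys; $\zep$ sends $\ell$ to its mirror $\ell'$ in the south, gluing $C_\ell$ to $C_{\ell'}$ along their common transverse arc, and the result is one closed curve $\gamma_{2i}$ that traverses the two transverse arcs and runs over the two handles corresponding to the two faces flanking $\ell$. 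For (4): each longitudinal edge lies on the longitude $NS$ and separates one northern face (a band-disk or the polar disk cut by $NS$) from ... itself — that is, both sides of a longitudinal edge belong to the \emph{same} northern face, since cutting a disk along one boundary arc leaves it connected; but $\zep$ glues this edge to its mirror longitudinal edge in the south, which borders the mirror face, so $C$ over this edge and its mirror join up over the single handle for that face-pair, giving the curve $\gamma_{2i+1}$. Indexing by the edge-cycle labels of Figure~\ref{fig-tbf2} (even for latitudinal, odd for longitudinal) matches the $\delta_j$'s to the faces as claimed.

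The only real subtlety — and the step I expect to require the most care — is the bookkeeping in (3) and (4): namely verifying that the longitude $NS$ is drawn so that cutting each latitudinal band along $NS$ produces a \emph{single} disk (not two), so that the two sides of a longitudinal edge are the same face while the two sides of a latitudinal edge are distinct faces, and then correctly matching each resulting arc-over-a-handle to the right meridian $\delta_j$ under the chimney-top identification. Once the face-adjacency pattern of $\Gamma$ is pinned down, each of (2), (3), (4) is just the simple-example computation applied to the relevant edge, and the lemma follows.
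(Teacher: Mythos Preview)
Your proposal is correct and matches the paper's approach exactly: the paper does not give a separate proof of this lemma at all, but simply says ``The general picture, for the construction of Section~\ref{sec-facepairing}, follows similarly'' and then states the lemma as a summary. You have carried out precisely that generalization from the worked $k=0$ example, with the correct face-adjacency bookkeeping (in particular the key observation that both sides of each longitudinal edge lie in the same face, while the two sides of each non-equatorial latitudinal edge lie in distinct faces), so there is nothing to add.
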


Curves parallel to those of Lemma~\ref{lemma:HandleCurves} are illustrated in Figure~\ref{fig-tbf16}. Note these curves have been pushed slightly to be disjoint, in a manner described in Subsection~\ref{subsec:surgery}. 

\begin{figure}
  \import{figures/}{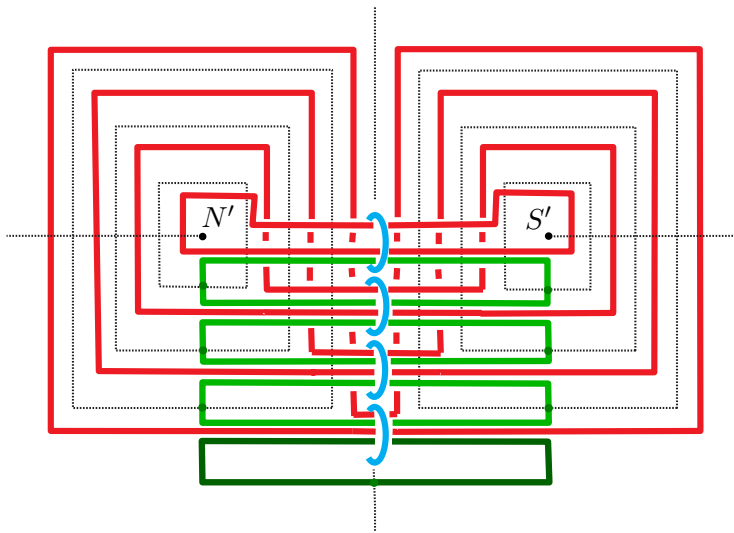}
\caption{The curves $\delta$ and $\gamma$ in $H_\Zd$. Curves $\delta$ are shown in blue, $\gamma_0$ in dark green at the bottom of the diagram, curves of $\gamma$ correpsonding to to latitudinal transverse arcs are in green, and curves of $\gamma$ corresponding to longitudinal transverse arcs are in red.}
\label{fig-tbf16}
\end{figure}

\subsection{The surgery description}\label{subsec:surgery}
We assume now that we are given a bi-twist construction based on
$(B,\zep)$. We are given the following information:

\begin{description}\setlength{\itemsep}{.05in}
\item[$c_1,\ldots,c_k$] the edge cycles of $\zep$.
\item[$m = \{m_1,\ldots,m_k\}$] a set of nonzero integer multipliers assigned to these edge cycles.
\item[$\zep_m:\bd B \to \bd B$] the associated bi-twist face-pairing. 
\item[$M(\zep,m)=M(\zep_m) = B/\zep_m$] the resulting bi-twist manifold.
\end{description}

The set $\zd = \bd D_\Zd$ is a disjoint union of simple closed handle curves $\zd_1,\ldots,\zd_g$ for $ H_\Zd$, one for each face pair of $\zep$. We first push each $\zd_i$ slightly into $\Rth\setminus H_\Zd$ to a curve $\zd_i'$. We let $V_i$ denote a solid torus neighborhood of $\zd_i'$ in $\Rth\setminus H_\Zd$, remove it, and sew a new solid torus $V_i'$ back in with meridian and longitude reversed ($0$-surgery on each $\zd_i'$). The curve $\zd_i$ now bounds a disk $E_i$, disjoint from $H_\Zd$, consisting of an annulus from $\zd_i$ to $\bd V_i'$ and a meridional disk in $V_i'$. The result is a new handlebody
\[
H' = [\hbox{cl}(\Sth\setminus H_\Zd)\setminus \cup V_i] \cup [\cup V_i']
\]
with the same handle curves $\zd_1,\ldots,\zd_g$ as $H_\Zd$ and with
handle disks $E_1,\ldots,E_g$. The union $H_\Zd \cup H'$ is
homeomorphic to $(\St \times \So)\# \cdots \# (\St \times \So)$.

The set $\zc = \bd D_\Zc$ is a disjoint union of simple closed curves $\zc_1,\ldots, \zc_k$ on $\bd H'$, one for each edge class of $\zep$. We push each $\zc_j$ slightly into $\hbox{int}(H')\setminus (\cup V_i')$ to a curve $\zc_j'$. On each $\zc_j'$ we perform $\hbox{lk}(\zc_j,\zc_j') + (1/m_j)$ surgery. Note from Lemma~\ref{lemma:HandleCurves} that in our applications, the
curves  $\zc_j$ will be unknotted and the curves $\zc'_j$ will have linking number $0$ with them. 

These surgeries modify $H'$ to form a new handlebody $H''$. By \cite[Theorem~4.3]{CFP4}, 
$(H_\Zd,H'')$ is a Heegaard splitting for $M(\zep,m)$ (or, because of ambiguities associated with
orientations, the manifold $M(\zep,-m)$, with $-m =\{-m_1,\ldots,-m_k\}$, which is homeomorphic with $M(\zep,m)$).

For our purposes, it is important to see that these surgeries can be realized by an explicit homeomorphism from $H'$ to $H''$ defined by Dehn-Lickorish moves. To that end, we enclose $\zc_j'$ in a solid torus neighborhood $U_j$ that is joined to $\zc_j$ by an annulus $A_j$. We remove $U_j$ and cut the remaining set along $A_j$. Let $A_j'$ denote one side of the cut. We may parametrize a neighborhood of $A_j'$ by $(\zth, s, t)$ where $\zth\in \R ({\text{mod } }2\pi)$ is the angle around the circle $\zc_j$, $s\in [0,1]$ is the depth into $H'$, and $t\in [0,1]$ is distance from $A_j'$. Then one twists this neighborhood of $A_j'$ by the map $(\zth,s,t)\mapsto (\zth + (1-t)\cdot m_j\cdot 2\pi,s,t)$ before reattaching $A_j'$ to its partner $A_j''$ to reconstitute $A_j$. This twisting operation defines a homeomorphism $\phi\from [H'\setminus(\cup U_j)]\to [H'\setminus(\cup U_j)]$. One then reattaches the solid tori $U_j$ via the homeomorphisms $\phi|_{\bd U_j}$ to form $H''$, with an extended homeomorphism $\Phi:H'\to H''$. The homeomorphism $\Phi$ is the identity except in a small neighborhood of $\zc$. The new handle disks are $\Phi(E_1),\dots,\Phi(E_g)$.

We apply this to obtain a surgery description for our construction. Recall from subsection~\ref{sec-facepairing} that our multipliers were chosen to be $\pm 1$ on latitudinal edge cycles, and any integer $m_i$ on longitudinal edges. We record the result in the following lemma.

\begin{lemma}\label{lemma:genSurgeryDescription}
Let $\Zc$ and $\zep$ be as in subsection~\ref{sec-facepairing}, with handle curves as in Lemma~\ref{lemma:HandleCurves}. Then the manifold $M(\zep,m)$ has the following surgery description.
\begin{enumerate}
\item There are $k+1$ simple closed curves $\delta_0', \dots, \delta_k'$, with each $\delta_j'$ parallel to $\delta_j$, pushed to the exterior of the handle of $H_\Zd$. Each $\delta_j'$ has surgery coefficient $0$.
\item Each curve of $\gamma$ corresponding to a latitudinal edge class $\gamma_{2i}$ appears with surgery coefficient $m_{2i} = \pm 1$, $i=0, \dots, k$.
\item Each curve of $\gamma$ corresponding to longitudinal edge class $\gamma_{2i+1}$ has surgery coefficient $1/m_{2i+1}$. If one of these multipliers is $0$, so that the edge collapses to a point and disappears as an edge class, we retain the corresponding curve, but with surgery coefficient $1/0=\infty$.
\end{enumerate}
\end{lemma}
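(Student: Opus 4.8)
The plan is to specialize the general surgery construction of Subsection~\ref{subsec:surgery} to the model face-pairing $(B,\zep)$ of Subsection~\ref{sec-facepairing}, feeding in the explicit list of handle curves provided by Lemma~\ref{lemma:HandleCurves}. Recall from \cite[Theorem~4.3]{CFP4}, as reviewed above, that a surgery diagram in $\Sth$ for $M(\zep,m)$ is produced from the fixed unknotted embedding of $H_\Zd$ by two operations: push each of the $g$ handle curves $\zd_1,\dots,\zd_g$ of $H_\Zd$ slightly off into $\Sth\setminus H_\Zd$ and assign it surgery coefficient $0$; and push each handle curve $\zc_j$ of $H_\Zc$ (one per edge cycle) slightly into the interior of the intermediate handlebody $H'$ to a curve $\zc_j'$ and assign it coefficient $\operatorname{lk}(\zc_j,\zc_j')+1/m_j$. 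So the content of the lemma is precisely the transcription of the geometric data of Lemma~\ref{lemma:HandleCurves} and the embedding of Figure~\ref{fig-tbf9}(b) (drawn in Figure~\ref{fig-tbf16}) into this recipe.

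First I would read off statement~(1): by Lemma~\ref{lemma:HandleCurves}(1) there is one handle of $H_\Zd$ for each of the $k+1$ regions of $\St\setminus\Zc$ in the northern hemisphere, so $g=k+1$, the handle curves are $\zd_0,\dots,\zd_k$, and pushing each to the exterior with coefficient $0$ is exactly statement~(1). Next, Lemma~\ref{lemma:HandleCurves}(2)--(4) lists one curve $\zc_j$ per edge cycle, with $\zc_{2i}$ ($i=0,\dots,k$) latitudinal and $\zc_{2i+1}$ ($i=0,\dots,k$) longitudinal. From the explicit picture these curves lie in the plane $\Rt\times\{0\}$ except where a curve runs once over a single handle, so each $\zc_j$ is an unknot, and the pushoff $\zc_j'$ prescribed in Subsection~\ref{subsec:surgery} (shown disjointified in Figure~\ref{fig-tbf16}) satisfies $\operatorname{lk}(\zc_j,\zc_j')=0$ -- the observation already recorded just after the statement of the surgery construction. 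Hence the coefficient on $\zc_j$ is $1/m_j$. Since the multipliers were required to be $\pm1$ on the latitudinal cycles, $1/m_{2i}=m_{2i}\in\{\pm1\}$, which is statement~(2), and $1/m_{2i+1}$ is the coefficient on each longitudinal curve, which is the first assertion of statement~(3).

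It remains to treat the degenerate case $m_{2i+1}=0$, and I expect this to be the only \emph{delicate} point. By the convention of Subsection~\ref{sec-facepairing}, $m_{2i+1}=0$ means the two edges of the cycle $c_{2i+1}$ are collapsed to a point before bi-twisting, so $c_{2i+1}$ is not an edge cycle of the face-pairing actually used and the recipe above produces no surgery curve for it. The claim is that one may nevertheless reinstate $\zc_{2i+1}$ with coefficient $1/0=\infty$. This rests on two observations: first, $\infty$-surgery (remove a solid-torus neighborhood, reglue by the identity) does not change the ambient manifold, so adjoining $\zc_{2i+1}$ with coefficient $\infty$ leaves $M(\zep,m)$ unchanged; and second, passing between the collapsed and uncollapsed face-pairings alters the pseudo-Heegaard picture only near the edge $c_{2i+1}$ -- it reinstates the handle disk of $D_\Zc$ bounded by $\zc_{2i+1}$ while leaving the embedding of $H_\Zd$ and all other curves $\zd_j$ and $\zc_{j'}$ ($j'\ne 2i+1$) as in the general case -- so the surgery diagram for the collapsed pairing is obtained from the one just described simply by deleting the curve $\zc_{2i+1}$. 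Verifying this local compatibility carefully (tracking what happens to the two neighboring latitudinal handles and their $\zd$-curves, and to the faces abutting $c_{2i+1}$, under the collapse) is the one thing that needs genuine attention; everything else in the lemma is a direct transcription of Lemma~\ref{lemma:HandleCurves} into \cite[Theorem~4.3]{CFP4}, using the already-noted unknottedness and vanishing of linking numbers.
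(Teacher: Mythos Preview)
Your proposal is correct and follows essentially the same approach as the paper: the paper treats this lemma as an immediate consequence of the general surgery recipe in Subsection~\ref{subsec:surgery} (specifically \cite[Theorem~4.3]{CFP4}) specialized via Lemma~\ref{lemma:HandleCurves}, and indeed provides no separate proof beyond the sentence ``We record the result in the following lemma'' followed by a \qed. Your treatment of the degenerate case $m_{2i+1}=0$ is in fact more careful than anything the paper writes out; the paper simply asserts the convention in the lemma statement without further justification.
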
\qed

The curves are shown in Figure~\ref{fig-tbf16}. 

\subsection[The North-South Axis]{The knot as the image of the
north-south axis}\label{sec-axis}

It is now an easy matter to identify the image of the north-south axis in our bi-twist constructions. In particular, we want to recognize this curve in the associated surgery description of the manifold. The portion of the curve in the handlebody $H_\Zd$ is obvious. That portion in the handlebody $H_\Zc$ is simple, yet not so obvious. We need a criterion that allows us to recognize it.

To that end, suppose that $\Hc$ is a pseudo-handlebody with one vertex $x$. Recall that $\Hc \setminus \Dc$ has a natural cone structure from $x$. We say that an arc $\alpha$ in $\Hc \setminus \Dc$ is \emph{boundary parallel} if there is a disk $D$ in $\Hc \setminus \Dc$ such that $(\bd D) \cap (\hbox{int}(\Hc)) = \hbox{int}(\alpha)$ and $(\bd D)\cap (\bd \Hc)$ is an arc $\alpha'$.

\begin{lemma}\label{lemma:bdyParallel}
Suppose $a,b\in (\bd\Hc)\setminus \Dc$ with $a\ne b$. Then the arc $\alpha = ax\cup bx$ (using the cone structure) is boundary parallel, and any arc $\beta$ that has $a$ and $b$ as endpoints and is boundary parallel is, in fact, isotopic to $\alpha$.
\end{lemma}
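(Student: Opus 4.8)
The plan is to exploit the cone structure on $\Hc \setminus \Dc$ directly. Recall that by the theorem quoted above, the disks of $\Dc$ cut $\Hc$ into pieces $X_i$, each of which is a cone $v_i S_i$ over a closed orientable surface $S_i$, and here we are assuming there is a single vertex $x$, so (after cutting) we are working in a cone $x S$ with $S$ a closed orientable surface. The arc $\alpha = ax \cup bx$ is then literally a union of two cone segments meeting at the cone point. First I would build the disk $D$ witnessing boundary parallelism: choose any embedded arc $\alpha'$ in $S = \bd\Hc \setminus \Dc$ from $a$ to $b$ (possible since $S$ is connected — or, if $a$ and $b$ lie on different components, one instead works within the appropriate union of pieces, but the statement as used has them on a common boundary surface), and let $D = x \ast \alpha'$ be the cone on $\alpha'$ from $x$. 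Then $\bd D = \alpha \cup \alpha'$, the interior of $D$ meets $\bd\Hc$ only along $\alpha'$, and $(\bd D)\cap \mathrm{int}(\Hc) = \mathrm{int}(\alpha)$ as required. Because $S$ is a surface, $\alpha'$ can be taken embedded, and coning an embedded arc from the cone point gives an embedded disk. This establishes that $\alpha$ is boundary parallel.

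For the uniqueness statement, suppose $\beta$ is another boundary-parallel arc with endpoints $a,b$, witnessed by a disk $D_\beta$ with $(\bd D_\beta)\cap \mathrm{int}(\Hc) = \mathrm{int}(\beta)$ and $(\bd D_\beta)\cap\bd\Hc = \beta'$. I would first isotope $\beta$ (rel endpoints, through $\Hc\setminus\Dc$) so that it meets the cone point $x$, using the fact that in a cone $xS$ any arc can be pushed onto the union of two radial segments through $x$ once we know its homotopy class: the key point is that $\Hc\setminus\Dc$ deformation retracts to the wedge of the boundary surface with... — more precisely, $xS$ is contractible, so $\beta$ and $\alpha$ are homotopic rel endpoints in $\Hc\setminus\Dc$; the work is to promote this homotopy to an ambient isotopy. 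Here I would use that $\Hc\setminus\Dc$ is an irreducible $3$-manifold with boundary (a punctured-ball-like cone, or handled appropriately), and that two properly embedded arcs in an irreducible $3$-manifold with the same endpoints which are each boundary-parallel (equivalently, each $\partial$-parallel, hence together with a boundary arc bound a disk) are isotopic: both $\alpha\cup\alpha'$ and $\beta\cup\beta'$ bound disks, and a standard innermost-disk/outermost-arc argument on the intersection of $D$ with $D_\beta$ reduces to the case where the two disks are disjoint except along a common sub-arc, after which the two arcs cobound an embedded disk in $\Hc\setminus\Dc$ and hence are isotopic.

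The main obstacle I anticipate is the uniqueness half, specifically making the "boundary-parallel arcs with matching endpoints are isotopic" step rigorous in the pseudo-handlebody (cone) setting rather than in an honest handlebody. The cone $xS$ need not be a manifold at $x$ unless $S$ is a sphere, so I cannot simply cite manifold $\partial$-parallelism results. I would handle this by noting that boundary parallelism of $\alpha$ provides a disk $D$ with $D\cap\Dc=\varnothing$, so the entire discussion takes place in $\Hc\setminus\Dc = \bigsqcup X_i$, and within a single cone piece $X_i = x S_i$ the relevant fact is elementary: $X_i \setminus x$ is homeomorphic to $S_i\times(0,1]$, the arcs $\alpha\setminus x$ and $\beta\setminus x$ are properly embedded arcs there with the same endpoints, and in $S_i\times(0,1]$ any such arc that is $\partial$-parallel is isotopic to a vertical-then-horizontal "L", all such L's with fixed endpoints being isotopic. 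Pushing the isotopy to be fixed near $x$ and extending over $x$ by coning finishes the argument. I would present this product picture as the technical core and treat the innermost-disk cleanup as routine.
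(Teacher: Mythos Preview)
Your existence argument is essentially identical to the paper's: pick an arc $\alpha'$ in $(\partial H_\Gamma)\setminus D_\Gamma$ from $a$ to $b$ and cone it from $x$ to get the disk certifying that $\alpha$ is boundary parallel.

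Your uniqueness argument, however, has a genuine gap. The claim that in $S_i\times(0,1]$ ``all such L's with fixed endpoints [are] isotopic'' is false whenever $S_i$ has positive genus. If $\gamma_1$ and $\gamma_2$ are two arcs in $S_i\times\{1\}$ from $a$ to $b$ that are not homotopic rel endpoints in $S_i$ (and such arcs exist once $\pi_1(S_i)\neq 1$), then their push-offs into $S_i\times(0,1]$ are both boundary parallel but not even homotopic rel endpoints there, since $S_i\times(0,1]$ deformation retracts to $S_i$. So the product-structure detour cannot work: the cone point $x$ is exactly what makes all of these arcs fall together, and you have removed it.

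The paper's argument avoids this by never leaving the cone. Given the witness disk $E$ for $\beta$ with boundary arc $\beta'\subset\partial H_\Gamma$, first push $\text{int}(E)$ into $\text{int}(H_\Gamma\setminus D_\Gamma)$ and straighten $E$ near the boundary so it agrees with the cone over $\beta'$ there. Slide $\beta$ along $E$ until it lies close to $\beta'$. Now observe that the cone $x\beta'$ is itself a disk (a triangle) in $H_\Gamma\setminus D_\Gamma$ whose boundary is exactly $\alpha\cup\beta'$, since $\alpha = ax\cup bx$ is the cone over $\partial\beta'=\{a,b\}$. The slid copy of $\beta$ sits in (a neighborhood of) this triangle, and an isotopy across the triangle carries it to $\alpha$. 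No innermost-disk machinery or comparison of $D$ with $E$ is needed; the single cone disk $x\beta'$ does all the work. The point you were missing is that the final isotopy must pass through $x$, and the cone over $\beta'$ provides exactly the disk along which to do it.
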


\begin{proof}
The set $\Dc$ is a disjoint union of handle disks for $\Hc$, hence does not separate $\bd\Hc$. There is therefore an arc $\alpha'$ from $a$ to $b$ in $(\bd\Hc)\setminus \Dc$. The disk $x\alpha'$, which uses the cone structure, proves that $\alpha$ is boundary parallel. If $\beta$ is boundary parallel, as certified by disk $E$ and  arc $\beta'$, then we may first assume $\text{int}(E) \subset \text{int}(\Hc\setminus \Dc)$, then we may straighten $E$ so that, near $(\bd \Hc)\setminus \Dc$, $E$ is part of the cone over $\beta'$. The arc $\beta$ may be slid along $E$ near to $\beta'$, then isotoped along the cone over $\beta'$ until it coincides with $\alpha$.
\end{proof}

For our construction, we are primarily interested in a curve of the form $(Ov\cup Ow)/\zep_m$, where $O$ is the center of $B$ and $v$ and $w$ are vertices of $\Zc$, all of which are identified by $\zep_m$ to a single vertex $x$ in $H_\Gamma$. The set $(Ov \cup Ow) \cap \Hd$ is immediately apparent. However, we must identify $\zb = (v'v\cup w'w)/\zep_m$, where $v' = (3/4)\cdot v$ and $w' = (3/4)\cdot w$. The images of $v$ and $w$ in $\Hc$ are the single
vertex $x$ of $\Hc$, and the image of $\zb$ is a cone from $x$ in the cone structure on $\Hc \setminus \Dc$. Therefore, by Lemma~\ref{lemma:bdyParallel}, to identify $\zb$ it suffices to find a boundary parallel arc in $\Hc$ with endpoints $v'$ and $w'$.

The vertices $v' = (3/4)\cdot v$ and $w' = (3/4) \cdot w$ lie in $\Rt \times \{0\}$, disjoint from the disks $(3/4)\cdot \Dd$, i.e.\ the attaching disks of the handles of $\Dd$ in $\RR^2\times\{0\}$. Hence, there is an arc $\za'$ in $(\Rt\times\{0\})\setminus (3/4)\cdot \Dd$ from $v'$ to $w'$. Take the product of $\za'$ and a small closed interval with left endpoint $0$ in $\Rt\times [0,\infty)\subset \Rth$. We obtain a disk $D$ in the handlebody $H$ that is the closure of $\Sth \setminus \Hd$. This disk exhibits the complementary arc $\za \subset \bd D$ as boundary parallel in $H$. We fix this arc and construct the handlebodies $H'$ and $H''$. Provided that the annuli and tori used in constructing $H'$ from $H$ are chosen close enough to the curves $\zd =\bd \Dd$ to avoid $D$, the disk $D$ will also certify that $\za$ misses the handle disks $E_i$ of $H'$ so that $\za$ is boundary parallel in $H'$. If the annuli $A_j$ and tori $U_j$ are chosen close enough to $\zc = \bd \Dc$ to avoid $\za$ (but not $D$), then the homeomorphism $\Zf\from H'\to H''$ will fix $\za$, will take the disks $E_i$ to handle disks for $H''$, and the disk $\Zf(D)$ will show that $\za$ is boundary parallel in $H''$. Thus $(Ov' \cup Ow') \cup \za$
represents the curve $(Ov\cup Ow)/\zep_m$ as desired.

Now we add this axis to our surgery descriptions. For the simplest construction, with equator $e$ and longitudinal arc $NS$, and handle curves as shown in Figure~\ref{fig:HTrefoil}, the surgery description is obtained by pushing $\delta_0$ slightly into $H$. Let $N'=(3/4)\cdot N$ and $S'=(3/4)\cdot S$ on $(3/4)\cdot \Gamma \subset \RR^2\times\{0\}$. The arc $(ON' \cup OS')$ runs below the plane $\RR^2\times\{0\}$ in $H_\Zd$. To find the arc $\alpha$, we take an arc $\alpha'$ from $N'$ to $S'$ in $\RR^2\times\{0\}$ disjoint from the handle, and, fixing the endpoints, push this above $\RR^2\times\{0\}$ slightly. By the above discussion, this gives the desired arc of the axis $NS$. The surgery diagram and the axis are shown for this example in Figure~\ref{fig-tbf11}.

\begin{figure}
  \import{figures/}{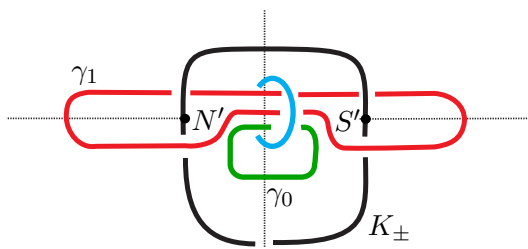}
\caption{The surgery diagram for $K_{\pm}$, $S_1$, and $F_1$.}
\label{fig-tbf11}
\end{figure}

\section{Two-bridge knots}\label{sec-twobridge}

In this section, we prove that the image of the $NS$ axis in Figure~\ref{fig-tbf11} represents the figure eight knot in $\Sth$ when the surgery coefficient is taken to be $+1$, and  the trefoil knot in $\Sth$ when the coefficient is taken to be $-1$.

More generally, we prove that the $NS$ axis in the general construction represents a two-bridge knot in $\Sth$.

\subsection{Identifying the trefoil and figure-eight}\label{subsec:trefoil}

We will modify the surgery diagram of Figure~\ref{fig-tbf11} by means of Rolfsen twists. We remind the reader of the effect of a Rolfsen twist. We assume we are given an unknotted curve $J$ with surgery coefficient $p/q$ through which pass a number of curves, some of which are surgery curves  $K_i$ with surgery coefficients $r_i$, some of which may be of interest for some other reasons, such as our knot axis. We perform an $n$-twist on $J$. The curves passing through $J$ acquire $n$ full twists as a group.
The curve $J$ acquires the new surgery coefficient $p/(q+np)$; in particular, if $p = 1$, then a twist of $-q$ will change the coefficient to $\infty$, and any curve with a surgery coefficient $\infty$ can be removed from the diagram.
Finally, each surgery curve $K_i$ that passes through $J$ acquires the new surgery coefficient $r_i+n\cdot lk(J,K_i)^2$. 

\begin{theorem}\label{thm:trefoil}
The surgery description of $M(\zep, m)$ for the simple face pairing of Figure~\ref{fig-tbf3}(a) yields the manifold $\Sth$. The image of the north-south axis is the trefoil knot when $m=(-1, 1)$ and the figure-eight knot when $m=(1,1)$.
\end{theorem}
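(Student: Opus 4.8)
The plan is to start from the surgery diagram that Lemma~\ref{lemma:genSurgeryDescription} attaches to the simple face pairing of Figure~\ref{fig-tbf3}(a) --- drawn concretely in Figure~\ref{fig-tbf11} --- and to simplify it by Rolfsen twists while carrying the $NS$ axis along. For this graph there are exactly three surgery curves: the unknot $\delta_0'$ coming from the single handle of $H_\Zd$, with coefficient $0$; the unknot $\gamma_0$ dual to the equator $e$, with coefficient $m_0=\pm1$; and the unknot $\gamma_1$ dual to the longitude $NS$, with coefficient $1/m_1=1$. The first step is to pin down the precise isotopy type of this three-component link together with the arc $\alpha$ representing the axis: read off from Lemma~\ref{lemma:HandleCurves} and the explicit embedding of $H_\Zd$ exactly how $\gamma_0$ and $\gamma_1$ run over the handle (hence their linking numbers with $\delta_0'$ and with each other) and exactly how the axis threads through each $\gamma_i$. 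Because negating all multipliers replaces the axis knot by its mirror (the observation in Section~\ref{sec-trefoil}), it suffices to treat $m=(1,1)$ and $m=(-1,1)$.

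Next I would blow down the $\pm1$-framed unknots one at a time, using the Rolfsen twist recipe recalled just before the statement: removing a $\pm1$-framed unknot changes the coefficient of each surviving surgery curve by $\mp$ the square of its linking number with the removed curve, and gives the whole bundle of strands running through the removed curve --- the axis included --- one full twist as a group. Carrying this out in the right order collapses the diagram to the empty one, which proves $M(\zep,m)=\Sth$. The one thing that must be checked here is arithmetic: the running coefficients have to stay of the form $0$ or $\pm1$ so that the next blow-down is legal (a miscount would, for instance, leave a $\pm2$-framed unknot, i.e.\ $\R P^3$); it is precisely the values $0,\pm1,1$ and the linking numbers found in the first step that make the reduction terminate at $\Sth$.

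Running the axis through the same sequence of moves and recording the full twists it picks up, one is left with an unknotted circle carrying a short standard diagram. I expect this to be a two-crossing clasp picture whose two clasps have a relative sign dictated by $m_0$, giving the alternating figure-eight diagram when $m_0=+1$ and the trefoil diagram when $m_0=-1$; the single sign change $m_0=+1\mapsto m_0=-1$ flips exactly one clasp, which is just the passage between $4_1$ and $3_1$. As a consistency check this matches Theorem~\ref{thm-qisS3}: here the axis is the numerator closure of the tangle $T(2m_0+1/2)$, that is, of $T(5/2)$ (figure-eight) when $m=(1,1)$ and of $T(-3/2)$ (trefoil) when $m=(-1,1)$.

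The expected main obstacle is not the coefficient bookkeeping, which is a short computation, but the \emph{geometric} bookkeeping of the axis: one needs an honest picture of how $\alpha$ and the arc $ON'\cup OS'$ sit relative to $\delta_0'$, $\gamma_0$, $\gamma_1$ in Figure~\ref{fig-tbf11}, since the number of axis strands that get twisted at each blow-down, and how they are clasped together afterwards, is what ultimately separates the two knots. I would therefore carry the explicit figures through each Rolfsen move rather than rely on linking numbers alone.
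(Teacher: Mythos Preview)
Your proposal is correct and follows essentially the same route as the paper: reduce the three-component surgery diagram of Figure~\ref{fig-tbf11} to the empty diagram by Rolfsen twists while tracking the axis. The paper carries this out in the specific order $\gamma_1,\ \gamma_0,\ \delta'$ (a $-1$ twist on $\gamma_1$ puts one negative full twist into the axis; the $\mp1$ twist on $\gamma_0$ does not touch the axis but shifts the coefficient on $\delta'$ from $0$ to $\mp1$; the final $\pm1$ twist on $\delta'$ adds one more full twist to the axis), and records the evolving picture in Figure~\ref{fig-tbf14}---exactly the ``explicit figures through each Rolfsen move'' you anticipated needing.
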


\begin{proof}
We apply Rolfsen twists to our surgery curves in the order $\zc_1$, $\zc_0$, and $\zd'$ to change their surgery coefficients, one after the other, to $\infty$. We trace the effect on the axis $K_{\pm 1}$, and show this in Figure~\ref{fig-tbf14}.

\begin{figure}
  \begin{center}
    \hspace{0.5in}
  \import{figures/}{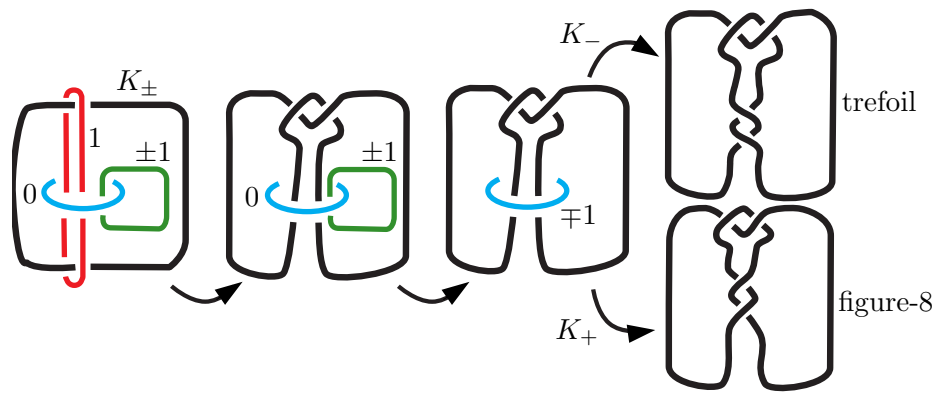}
  \end{center}
\caption{Analyzing $S_1$, $F_1$, $K_-$, and $K_+$.}
\label{fig-tbf14}
\end{figure}

In detail, we first perform a $-1$ Rolfsen twist on $\zc_1$. This changes the surgery coefficient on $\zc_1$ to $\infty$ so that $\zc_1$ can be removed from the diagram. In the process, one negative full twist is added to the axis representing $K_\pm$.

We next perform a Rolfsen twist on $\zc_0$ to change its surgery coefficient to $\infty$ so that it too can be removed from the diagram. If the coefficient on $\zc_0$ was originally $1$, this twist must be a $-1$ twist. If the coefficient on $\zc_0$ was originally $-1$, this twist must be a $+1$ twist. The coefficient of this twist is added to the $0$ coefficient on the $\zd'$ curve. The axis is not affected.

Finally, we perform a Rolfsen twist on $\zd'$, opposite to its surgery coefficient $\mp 1$ so that its coefficient is changed to $\infty$. That makes it possible to remove $\zd'$ from the diagram. Since the diagram is now empty, we can conclude that the quotient manifold is $\Sth$.

This last twist adds a $\pm 1$ full twist to the axis and results in either the trefoil knot for the $(-1,1)$ multiplier pair or the figure-eight knot for the $(1,1)$ multiplier pair.
\end{proof}

\subsection{The general case}\label{sec-generalsurgery}

Having analyzed the simplest model face-pairing, we proceed to the general case. Thus we consider the $2$-sphere $\St = \bd\Bth$ subdivided by one longitude, the equator $e$, $k\ge 0$ latitudinal circles in the northern hemisphere, and their reflections in the southern hemisphere. As usual, we pair faces by reflection in the equator. There are $k+1$ face-pairs in this model face-pairing.

The general surgery description is given in Lemma~\ref{lemma:genSurgeryDescription}, and illustrated in Figure~\ref{fig-tbf16}. Subsection~\ref{sec-axis} tells us how to recognize the image of the north-south axis in this diagram. It is the union of a boundary parallel arc below the plane $\Rt \times \{0\}$ from $N'$ to $S'$ and a boundary parallel arc above the plane $\Rt \times \{0\}$ from $N'$ to $S'$. Straightening this axis curve and the surgery diagram, we obtain the diagram in Figure~\ref{fig-tbf17}.

\begin{figure}
  \import{figures/}{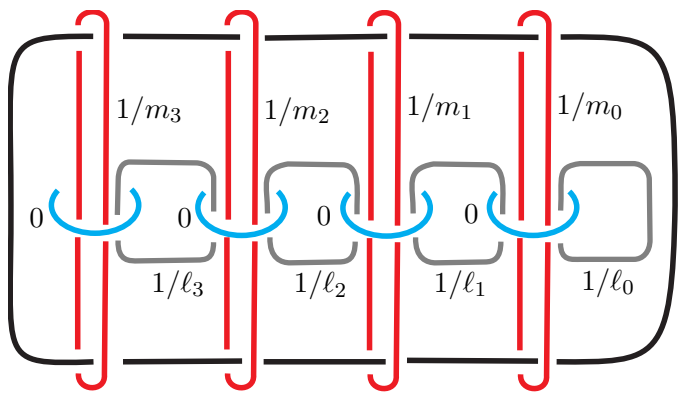}
\caption{The surgery diagram.}
\label{fig-tbf17}
\end{figure}

Recall that the integers $m_{2i+1}$ are arbitrary --- positive, negative, or zero. The integers $m_{2i}$ are either $+1$ or $-1$. Note that the surgery curves fall naturally into three families, each with $k+1$ curves: the $\delta$ curves, circling the handles with surgery coefficients $0$, the latitudinal curves, linking the $0$-curves together in a chain and having coefficients $1/m_{2i} = \pm 1$, and the longitudinal curves with coefficients $1/m_{2i+1}$. Each of these curve families has a natural left-to-right order, as in the figure. To simplify notation, we denote the latitudinal curves from left to right by $L_k$, $L_{k-1}$, $\ldots$, $L_1$, $L_0$, and let the corresponding surgery coefficients be denoted $1/\ell_k,$, $\ell_{k-1}$, $\ldots$, $1/\ell_i$, $1\ell_0$, respectively (so $\ell_i$ now replaces notation $m_{2i}$). We denote the longitudinal curves from left to right by $M_k$, $M_{k-1}$, $\ldots$, $M_1$, $M_0$, and renumber their surgery coefficients to be $1/m_k$, $1/m_{k-1}$, $\ldots$, $1/m_1$, $1/m_0$. We denote the $\delta$ curves from left to right by $O_k$, $O_{k-1}$, $\ldots$, $O_1$, $O_0$, with surgery coefficients $0$. This decreasing order of subscripts is suggested by the usual inductive description of a rational tangle and the associated continued fraction $[a_0,\,a_1,\,\ldots,\,a_n] = a_0+1/(a_1+1/(a_2 +\cdots +1/a_n))$, where the coefficient $a_n$ represents the first twist made in the construction and $a_0$ represents the last twist.

We now prove the following theorem.

\begin{theorem}\label{thm-qisS3}
For the general bi-twisted face-pairing given in subsection~\ref{sec-facepairing}, the quotient $3$-manifold is $\Sth$. The image of the north-south axis in $\Sth$ is the two-bridge knot which is the numerator closure of the tangle $T(a/b)$ whose rational number invariant $a/b$ is $[2\ell_0, 2m_0, 2\ell_1, 2m_1, \dots, 2\ell_k, 2m_k]$, or in continued fraction form:
 \begin{equation*}
2\cdot \ell_0+\cfrac{1}{2\cdot m_0+\cfrac{1}{2\cdot \ell_1+
\cfrac{1}{2\cdot m_1+ \cfrac{1}{\ddots}}}}\,.
  \end{equation*}
Here $\ell_0, \ell_1, \ell_2,...$ are the multipliers of the latitudinal 
edge cycles, and $m_0, m_1, m_2,...$ are the multipliers of the 
longitudinal edge cycles.
\end{theorem}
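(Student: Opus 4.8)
The plan is to continue the line of argument used to prove Theorem~\ref{thm:trefoil}. We start from the surgery description of $M(\zep,m)$ given by Lemma~\ref{lemma:genSurgeryDescription} and drawn in Figure~\ref{fig-tbf17}, together with the description of the image of the north--south axis from Subsection~\ref{sec-axis}, and we reduce the surgery diagram to the empty diagram by a sequence of Rolfsen twists. Because Rolfsen twists do not change the underlying $3$-manifold and the empty diagram represents $\Sth$, this proves that the quotient is $\Sth$. While doing the reduction we keep the axis $K$ in the picture as a non-surgery curve and record, at each Rolfsen twist, the full twists grafted onto $K$; at the end $K$ is displayed as an unknotted circle into which a nested sequence of twist regions has been inserted, i.e.\ as the numerator closure of a rational tangle, and reading off the twist numbers gives the asserted continued fraction $[2\ell_0,2m_0,2\ell_1,2m_1,\dots,2\ell_k,2m_k]$.

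The reduction is organized as an induction on $k$. The base case $k=0$ is the computation of Theorem~\ref{thm:trefoil}, carried out now for an arbitrary longitudinal multiplier $m_0$ (not only $m_0=1$): clearing the longitudinal curve $M_0$, of coefficient $1/m_0$, with the appropriate Rolfsen twist changes its coefficient to $\infty$ and grafts $m_0$ full twists onto $K$; clearing the latitudinal curve $L_0$, of coefficient $\pm1$, changes the coefficient of the $\delta$-curve $O_0$ from $0$ to $\pm1$ and leaves $K$ alone; and clearing $O_0$ grafts $\ell_0$ further full twists onto $K$ in the perpendicular direction and empties the diagram, exhibiting $K$ as the numerator closure of $T([2\ell_0,2m_0])$. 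For the inductive step we peel one ``level'' off an end of the chain in Figure~\ref{fig-tbf17}, say the block $M_k,L_k,O_k$, by the same three Rolfsen twists: clearing $M_k$ grafts $m_k$ full twists onto $K$, and clearing $L_k$ and $O_k$ grafts $\ell_k$ full twists onto $K$ in the transverse direction; the point is that the $0$-framed curve $O_k$ and the coefficient changes produced along the way leave, on the remaining curves, exactly a diagram of the form of Lemma~\ref{lemma:genSurgeryDescription} for the parameters $(\ell_0,m_0,\dots,\ell_{k-1},m_{k-1})$. By induction that diagram reduces to the empty diagram (so $M(\zep,m)\cong\Sth$) and the corresponding part of $K$ is the numerator closure of $T([2\ell_0,2m_0,\dots,2\ell_{k-1},2m_{k-1}])$; splicing in the two twist regions of $2m_k$ and $2\ell_k$ half-twists added at this step produces the numerator closure of $T([2\ell_0,2m_0,\dots,2\ell_k,2m_k])$. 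Since the numerator closure of a rational tangle $T(a/b)$ is, by definition, the two-bridge knot with rational invariant $a/b$ (see~\cite{Kau02}), this is the conclusion. If the coefficient bookkeeping on the neighbours $O_{k-1}$, $M_{k-1}$ should fail to close up this cleanly, one can instead perform all $3(k+1)$ Rolfsen twists in one fixed global order and track $K$ throughout; the count of grafted full twists is the same.

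The main obstacle is precisely this tracking of the axis through the sequence of Rolfsen twists and the recognition of the outcome as a nested stack of twist regions. Several points must be checked: that the latitudinal and longitudinal surgery curves link $K$ in perpendicular directions, so that the grafted twist regions alternate between the two slopes of a rational tangle --- this is why the continued fraction alternates the $2\ell_i$ and the $2m_i$; that each twist region has an even number of half-twists, i.e.\ consists of full twists, which is exactly the factor $2$ in the statement and is forced by the fact that the latitudinal coefficients are $\pm1$ and the longitudinal coefficients are reciprocals of integers, so that a single Rolfsen twist of either produces full twists of $K$ (this is the content of the Remark following the theorem); that clearing $L_k$ and $O_k$ changes the coefficients of the neighbouring curves in exactly the way needed to restore the hypotheses of Lemma~\ref{lemma:genSurgeryDescription}; and that the degenerate case $m_i=0$, in which the longitudinal edge collapses and $M_i$ carries coefficient $1/0=\infty$, is handled by deleting $M_i$ outright, which inserts $2\cdot0$ half-twists and is consistent with the continued fraction. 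Finally, the construction is only well defined up to the orientation ambiguity between $M(\zep,m)$ and $M(\zep,-m)$ noted after Lemma~\ref{lemma:genSurgeryDescription}, and up to mirror image (which, as observed in Section~\ref{sec-trefoil}, corresponds to negating all multipliers); accordingly the axis is pinned down only up to the standard equivalences of rational tangles, under which $T(a/b)$ with the displayed continued fraction is the right answer. Either one fixes all orientations and signs at the outset, or one states the conclusion modulo these equivalences.
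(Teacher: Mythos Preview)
Your approach is the paper's approach: the same surgery diagram, the same sequence of Rolfsen twists on $M_k,L_k,O_k,M_{k-1},\dots$, and the same bookkeeping of how many half-twists get grafted onto the axis at each step. The difference is in how the induction is packaged, and this is where your write-up has a gap.

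You induct on $k$ and claim that after clearing $M_k,L_k,O_k$ the remaining surgery curves are exactly those of Lemma~\ref{lemma:genSurgeryDescription} for the parameters $(\ell_0,m_0,\dots,\ell_{k-1},m_{k-1})$. For the surgery curves and their coefficients that is true, but the \emph{axis} is no longer the axis of the smaller problem: it now carries a small tangle where the two strands used to emerge near $N',S'$. Your induction hypothesis, as stated, is about the unadorned axis of the $(k-1)$-diagram, so it does not apply directly; the phrase ``splicing in the two twist regions'' hides exactly the point that needs proof, namely that the tangle already built can be treated as a black box sitting above all remaining curves, so that the subsequent Rolfsen twists act on its two outgoing strands exactly as they would act on the original axis. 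The paper makes this precise by formulating the induction not on $k$ but as a running invariant at step $j$: a rational tangle $T_j$ with continued fraction $[-2m_j,-2\ell_{j+1},\dots,-2m_k]$ sits in the upper-left corner, two strands from it pass through $O_j$, and the diagram to the right of $O_j$ is untouched. With that invariant the ``splicing'' is automatic. There is also a small geometric subtlety your outline skips: after twisting off $M_{j-1}$, \emph{four} strands of the axis pass through $O_{j-1}$, and one must observe (as in the paper's Figure~\ref{fig-tbf20}) that the inner two can be isotoped up and out, restoring the two-strand picture needed for the next round. Your fallback suggestion---do all $3(k+1)$ twists in one pass and track $K$ throughout---is precisely what the paper does; promoting it from backup plan to the actual argument, with the invariant above, closes the gap.
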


\begin{proof}
We shall reduce the surgery diagram to the empty diagram by a sequence of Rolfsen twists. This will show that the quotient manifold is $\Sth$. We shall track the development of the axis as we perform those twists and show that, at each stage, the knot is a two-bridge knot.  We perform the Rolfsen twists on curves in decreasing order of subscripts in the following order: $M_k$, $L_k$, $O_k$, $M_{k-1}$, $L_{k-1}$, $O_{k-1}$, etc., in order to change surgery coefficients one after the other to $\infty$. Once a coefficient is $\infty$, that curve can be removed from the diagram. 

There are two cases.

Case 1: If $m_k=0$, so that $1/m_k=\infty$, we simply remove $M_k$ and the axis is not affected. We may then remove $L_k$ and $O_k$ without affecting the rest of the diagram as follows. First, twist $-\ell_k=\mp 1$ about $L_k$, to give $L_k$ a surgery coefficient of $\infty$. This allows us to remove $L_k$. It also links $O_k$ and $O_{k-1}$ and changes the surgery coefficient on each from $0$ to $-\ell_k$, but it does not affect the axis or the other link components. Now twist $\ell_k$ times about $O_k$. This allows us to remove $O_k$, returns the surgery coefficient of $O_{k-1}$ to $0$, and leaves the rest of the diagram unchanged. The diagram is now as in Figure~\ref{fig-tbf17}, only with fewer link components. Thus we repeat the argument with this new link component. By induction, either all $m_j=0$, all link components can be removed, resulting in $\Sth$ with the unknot as the image of the axis, or eventually we are in case~2.

Case 2: If $m_k\neq 0$, we twist $(-m_k)$ times about $M_k$. The coefficient of $M_k$ then becomes $\infty$ so that $M_k$ can be removed from the surgery diagram.  This twists two strands of the axis together as in Figure~\ref{fig-tbf18}, introducing $-2\cdot m_k$ half twists into the axis (according to our sign convention). This twist has no effect on the other curves in the diagram.

\begin{figure}
  \import{figures/}{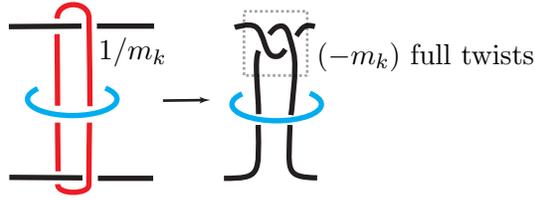}
\caption{Removing the curve $M_k$ adds $-m_k$ horizontal twists.}
\label{fig-tbf18}
\end{figure}

Note that the axis has formed a rational tangle at the top-left of the diagram. To identify the tangle, we will use work of Kauffman and Lambropoulou \cite{Kau02}, with attention to orientation. Our twisting orientation agrees with theirs for horizontal twists, and so at this point, the rational tangle has continued fraction with the single entry $[-2m_k]$. 

The proof now procedes by induction. We will assume that at the $j$-th step, we have a surgery diagram with image of the axis with the following properties. 
\begin{enumerate}
\item In the top left corner, there is a rational tangle $T_j$ with continued fraction
  \[
  [-2m_j, -2\ell_j, \dots, -2\ell_k, -2m_k].
  \]
\item Two strands run from the tangle through the link component $O_j$.
\item Link components $M_k$, $L_k$, $\dots$, through $M_j$ have been removed. 
\item To the right, the surgery diagram is identical to the original surgery diagram, beginning with link components $L_j$ and running to the right through the components $M_0$ and $L_0$. That is, the link components are identical for this portion of the diagram, and the surgery coefficients are also identical.
\end{enumerate}

\begin{figure}
  \import{figures/}{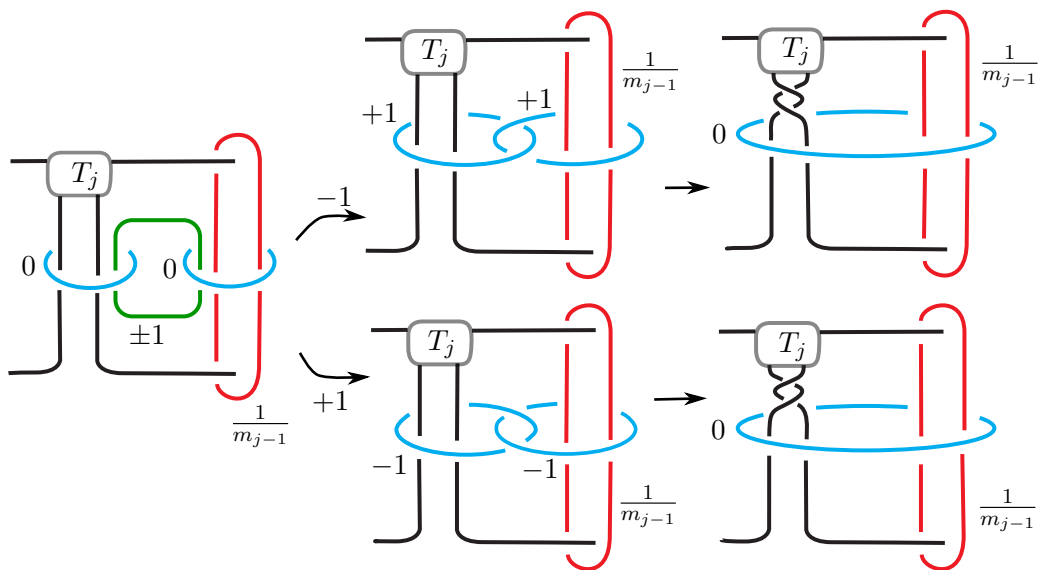}
\caption{The effect of Rolfsen twists to remove first $L_j$ and then $O_j$.}
\label{fig-tbf19}
\end{figure}

The next step is to remove link components $L_j$ and $O_j$. This is shown in Figure~\ref{fig-tbf19}, for both cases $\ell_j=\pm 1$. Carefully, we twist $-\ell_j$ times about $L_j$. The coefficient of $L_j$ then becomes $\infty$ so that $L_j$ can be removed from the surgery diagram. That twist adds $-\ell_j$ to the $0$ surgery coefficients of $O_j$ and $O_{j-1}$ and links those two curves together with overcrossing having sign equal to $-\ell_j$.  This twist has no effect on the axis. Now twist $\ell_j$ times about $O_j$. The coefficient of $O_j$ then becomes $\infty$ so that $O_j$ can be removed from the surgery diagram. The twist returns the surgery coefficient of $O_{j-1}$ back to $0$. The twist also adds $2\cdot \ell_j$ half twists to the two strands of the axis that were running through $O_j$. Note this yields a new rational tangle, with a vertical twist added to the tangle $T_j$. Our twisting orientation for vertical twists is opposite that of Kauffman and Lambropoulou \cite{Kau02}, and so the continued fraction of this new tangle becomes $T=[-2\ell_j, -2m_j, \dots, -2\ell_k, -2m_k]$.

\begin{figure}
  \import{figures/}{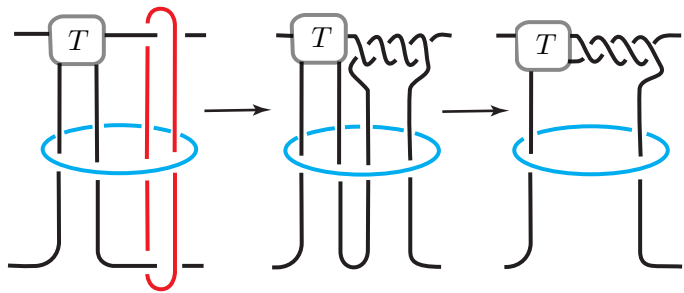}
\caption{Removing $M_{j-1}$ through twisting.}
\label{fig-tbf20}
\end{figure}

We now need to consider $M_{j-1}$. If $m_{j-1}=0$, so its surgery coefficient is $\infty$, we simply remove $M_{j-1}$ from the surgery diagram, and we have completed the inductive step. Otherwise, we twist $-m_i$ times about $M_i$, as in Figure~\ref{fig-tbf20}, after which four strands of the axis pass through $O_{j-1}$. However, the central two strands can be isotoped upward through $O_{j-1}$. This adds $-2m_{j-1}$ horizontal crossings to the tangle $T$, yielding a tangle $T_{j-1}$, and completes the inductive step. 

After the final step $j=0$, we have removed all $M_j$, $L_j$, $O_j$ from the surgery diagram, yielding $\Sth$, and our axis has become the denominator closure of a rational tangle $T(c/d)$ with continued fraction
\[
[-2\ell_0, -2m_0,\dots, -2\ell_k, -2m_k] = 
\cfrac{1}{-2\ell _0+\cfrac{1}{-2 m_0+\cfrac{1}{-2 \ell_1+
\cfrac{1}{-2 m_1+ \cfrac{1}{\ddots}}}}}\,.
\]
The continued fraction begins with $1/(-2\ell _0+\cdots)$ instead of $-2\cdot \ell_0+\cdots $ because $\ell _0$ corresponds to a vertical twist. Loosely speaking, horizontal twists correspond to addition and vertical twists correspond to addition and inversion. Hence our knot is the numerator closure of the tangle $T(a/b)$ with $a/b=-d/c$, as in the statement of the theorem.
\end{proof}

Recall from the introduction that a multiplier function $m$ with values $m_0,\dotsc,m_{2k+1}$ is normalized if $m_{2k+1}\ne 0$ and if $m_{2i+1}=0$ for some $i\in\{0,\ldots,k-1\}$, then $m_{2i}=m_{2i+2}$. The following example helps to motivate this definition.

\begin{example}
Figure~\ref{fig-tbf21} shows an example arising from multipliers given as follows:
\[
m_6=3,\,m_5=0,\,m_4 = 0,\,m_3 = 2,\,m_2=-3,\,m_1=0,\,m_0=2;
\]
\[
\ell_6 =1 ,\,\ell_5 = -1,\,\ell_4 = -1,\,\ell_ 3=-1 ,\,\ell_2 =1
,\,\ell_1 =1 ,\,\ell_0 =1 .
\]
In the notation of the previous paragraph, the multiplier function has values $\ell_0$, $m_0$, $\ell_1$, $m_1$, $\dots$, $m_6$.  This multiplier function is not normalized since $\ell _5=-\ell _6$ even though $m_5=0$.  As a result, the second vertical twist cancels the first one, and so they can be eliminated.  This is consistent with the fact that $x+ 1/(0+1/y)=x+y$, so that a continued fraction with a term equal to 0 can be simplified.  Also notice that if $m_6=0$ instead of $m_6=3$, then the first three vertical twists can be untwisted, and so they can be eliminated.  This is consistent with the fact that
$x+\frac{1}{y+1/0}=x$.
\end{example}

\begin{figure}
  \import{figures/}{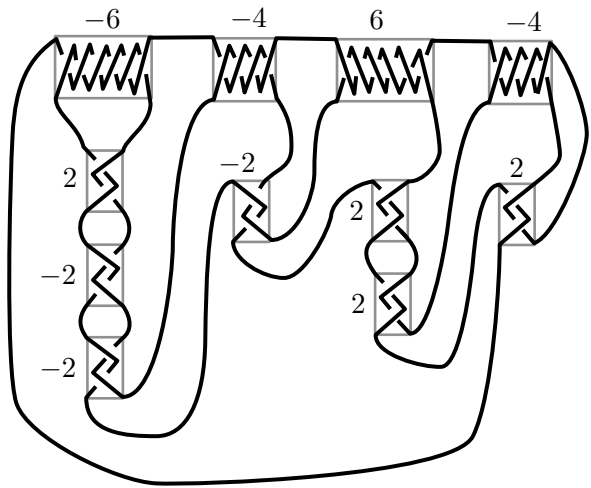}
\caption{An example.}
\label{fig-tbf21}
\end{figure}

\begin{corollary}\label{cor-denomclosure}
Every normalized multiplier function yields a nontrivial two-bridge knot.  Every nontrivial two-bridge knot $K$ is realized by either one or two normalized multiplier functions.  Furthermore, if $K$ is the numerator closure of the tangle $T(a/b)$, then it has exactly one such realization if and only if $b^2\equiv 1 \text{ mod } a.$
\end{corollary}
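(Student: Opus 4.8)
The plan is to deduce the Corollary from Theorem~\ref{thm-qisS3} together with two classical facts about two-bridge knots: that the numerator closure of $T(a/b)$, in lowest terms, is a knot precisely when $a$ is odd and is the unknot precisely when $a=\pm 1$; and Schubert's classification, that the numerator closures of $T(a/b)$ and $T(a'/b')$ coincide iff $|a|=|a'|$ and $b'\equiv b^{\pm 1}\pmod{a}$. Call a continued fraction $[2c_1,2c_2,\dots,2c_n]$ \emph{even-reduced} if every $c_i\in\ZZ\setminus\{0\}$; a parity computation on the convergents (the defining matrices are all $\equiv\binom{0\ 1}{1\ 0}$ mod $2$) shows that its value $p_n/q_n$ has $p_n$ odd and $q_n$ even exactly when $n$ is even.

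The first step is a bijection between normalized multiplier functions and even-reduced continued fractions of even length. Given a normalized $m$, Theorem~\ref{thm-qisS3} attaches the continued fraction $[2\ell_0,2m_0,2\ell_1,2m_1,\dots,2\ell_k,2m_k]$, whose only zero entries are the $2m_i$ with $m_i=0$ and $i<k$. Using the identity $[\dots,x,0,y,\dots]=[\dots,x+y,\dots]$ I merge each maximal run of consecutive zeros into its neighbours; part~(2) of the normalization forces the latitudinal multipliers flanking such a run to be equal, so a run of $r+1$ zeros produces the nonzero entry $\pm 2(r+2)$, while part~(1) leaves the last entry intact. The result is an even-reduced continued fraction, still of even length since each merge deletes two entries. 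Conversely, an even-reduced continued fraction $[2c_1,\dots,2c_{2n}]$ has a unique preimage: each odd-indexed (latitudinal) entry $2c_j$ with $|c_j|\ge 2$ must be split into $|c_j|$ copies of $\pm 2$ (with the sign of $c_j$) separated by zeros — the only expansion into $\pm 2$-valued latitudinal multipliers compatible with normalization — and entries with $|c_j|=1$ are left alone. Since merging preserves the value of a continued fraction, Theorem~\ref{thm-qisS3} shows the knot attached to $m$ depends only on the value of its even-reduced continued fraction.

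For nontriviality, an easy induction shows $|[2c_1,\dots,2c_m]|>1$ for any even-reduced continued fraction: the tail has absolute value $>1$, its reciprocal then lies in $(-1,1)$, and adding $2c_1$ with $|2c_1|\ge 2$ restores absolute value $>1$ (this also shows every entry produced by the greedy algorithm below is nonzero). Hence the value $a/b$ attached to a normalized $m$ has $a$ odd and $|a|>|b|\ge 1$, so $|a|\ge 3$, and its numerator closure is a nontrivial two-bridge knot, proving the first assertion. For the converse and the count, fix a nontrivial two-bridge knot $K$, realized as the numerator closure of $T(a/b)$ with $a>0$ odd, $0<b<a$, $\gcd(a,b)=1$, and let $c\in(0,a)$ be the inverse of $b$ mod $a$. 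By Schubert and the bijection above, the normalized multiplier functions realizing $K$ correspond to the even-reduced continued fractions of value $a/b'$ with $|b'|<a$ and $b'\equiv b^{\pm 1}\pmod a$, i.e.\ $b'\in\{b,\,b-a,\,c,\,c-a\}$. The arithmetic heart of the matter is: $a/b'$ (with $a$ odd) is the value of an even-reduced continued fraction if and only if $b'$ is even, and then of exactly one. Uniqueness is immediate, since any two even-reduced continued fractions of value $\alpha$ share their leading entry (an even integer within distance $<1$ of $\alpha$, of which there is at most one). Existence when $b'$ is even follows by running the greedy ``nearest even integer'' algorithm and tracking parities of numerator and denominator: the absolute values of the numerators strictly decrease, no intermediate value is an odd integer, and the process halts precisely at an even integer. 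Because $a$ is odd, exactly one of $b,\,b-a$ and exactly one of $c,\,c-a$ is even; these two coincide exactly when $b\equiv c\pmod a$, i.e.\ $b^2\equiv 1\pmod a$, yielding one realization in that case and two otherwise.

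The step I expect to be the main obstacle is this last arithmetic lemma: the subtleties are the ill-definedness of ``nearest even integer'' at odd integers and the verification that the greedy algorithm terminates when $b'$ is even but cycles when $b'$ is odd. This is essentially the classical even-continued-fraction normal form underlying two-bridge knot theory (cf.\ Kauffman--Lambropoulou, or Burde--Zieschang), and one could instead quote it; the bijection and the $|a|\ge 3$ estimate are routine bookkeeping once the merging identity is set up, and Schubert's theorem is used as a black box.
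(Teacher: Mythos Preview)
Your proof is correct and takes essentially the same approach as the paper's. The paper's own argument is a brief sketch --- it notes that the construction realizes any even continued fraction by choosing $m_j=0$ appropriately, cites the classical fact that $p/q$ with $p$ odd and $q$ even admits an expansion $[2a_0,\dots,2a_n]$ with $n$ odd, and then defers to standard two-bridge results in \cite{Ble88} and \cite{Kau02} --- so you have simply supplied the details (the explicit bijection with even-reduced continued fractions, the parity and magnitude estimates, and the Schubert count) that the paper leaves to the literature.
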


\begin{proof}
Note that our construction allows us to obtain any two-bridge knot with a rational invariant made only of even integers, by choosing $m_j=0$ appropriately. On the other hand, it is a classical result that any rational number $p/q$ with $p$ odd and $q$ even has a continued fraction expansion of the form $[2a_0,\dotsc,2a_n]$ with $n$ odd.  This result can also be derived by a modification of the Euclidean algorithm.  The corollary then follows from Theorem~\ref{thm-qisS3} and standard results involving two-bridge knots, much of which is contained in \cite{Ble88} and \cite{Kau02}.
\end{proof}

\section{Cyclic Presentations}\label{sec-cyclicpresentations}

Let $M_n(K_m)$ denote the $n$-fold branched cyclic covering of $\Sth$, branched over the two-bridge knot $K_m$ realized by the multiplier $m$. It is known (see~\cite{Cav99}) that the fundamental group $G_n$ of $M_n(K_m)$ has a cyclic presentation. We shall show here that the bi-twist representation of $M_n(K_m)$ easily leads to the same result.

\begin{definition}
Let $X = \{x_1,\ldots, x_{n}\}$ be a finite alphabet.   Let $\phi$ denote the cyclic permutation of $X$ that takes each $x_i$ to $x_{i+1}$, with subscripts taken modulo $n$. Let $W(X)$ denote a finite word in the letters of $X$ and their inverses. Then the group presentation
\[
\langle X\mid W(X),\,\phi(W(X)),\,\ldots, \phi^{n-1}(W(X))\rangle
\]
is called a \emph{cyclic presentation}.
\end{definition}

\begin{theorem}\label{thm-Gn}
The group $G_n = \pi_1(M_n(K_m))$ has a cyclic presentation.
\end{theorem}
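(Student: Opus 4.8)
The plan is to exploit the bi-twisted face-pairing description of $M_n(K_m)$ together with the pseudo-Heegaard/surgery machinery of Section~\ref{sec-Heegaard} and the combinatorial unwinding of the face-pairing about the $NS$-axis already indicated before Theorem~\ref{thm-Gn} (the passage referring to Figure~\ref{fig-tbf22}). Recall that $M_n(K_m)$ is obtained from the bi-twist manifold $M(\zep,m)=\Sth$ by ``unwinding'' $n$ times about the unknotted $NS$-axis, which amounts to taking the $n$-fold cyclic cover of the faceted ball $B$ branched over the polar axis and applying the lifted face-pairing. The key point is that this operation carries the built-in $\ZZ/n$ symmetry of rotation about the axis, and so does every piece of structure (the graph $\Zc_n$, the regular neighborhoods, the red and blue arcs, the handlebodies $\Hd$ and $\Hc$, the handle curves $\zd$ and $\zc$). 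First I would make this symmetry explicit: the lifted graph $\Zc_n$ is the union of $n$ longitudes together with $n$ copies of each latitudinal circle, permuted cyclically by the deck transformation $\zth$.

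Next I would use the pseudo-Heegaard splitting $(\Hd,\Hc)$ of $M_n(K_m)$ and read off a presentation of $\pi_1$ from it via the standard handlebody recipe: $\pi_1(M_n(K_m))$ is the free group on the handle curves $\zd$ of $\Hd$ (one generator per face-pair of the lifted face-pairing), modulo the relators given by expressing each handle curve $\zc_j$ of $\Hc$ as a word in those generators. (If one prefers, one can instead run van Kampen on the CW structure the face-pairing induces, with one $2$-cell per face-pair and one $1$-cell per edge cycle that survives; either bookkeeping works.) The deck transformation $\zth$ acts on this whole picture, cyclically permuting the face-pairs — hence the generators $x_1,\dots,x_N$ — and cyclically permuting the edge cycles — hence the relators. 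Because $\zth$ is the generator of the $\ZZ/n$ action, after choosing a fundamental domain for $\zth$ among the face-pairs and labelling generators so that $\zth(x_i)=x_{i+1}$ (indices mod $N$, where $N$ is $n$ times the number of face-pairs of the base face-pairing), the relators coming from one $\zth$-orbit of edge cycles become $W(X),\phi(W(X)),\dots,\phi^{N-1}(W(X))$ for a single word $W$; and one needs to check that edge cycles lying over the equator and over other distinguished latitudinal circles still assemble into $\zth$-orbits of full length, or else are handled by the same trick applied to each orbit separately and then combined. This is exactly a cyclic presentation in the sense of the definition above.

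The main obstacle I expect is the careful verification that $\zth$ acts \emph{freely} on the set of face-pairs of the unwound face-pairing, so that the generators genuinely fall into free $\ZZ/n$-orbits and no generator is fixed or identified with a $\zth$-translate of itself — equivalently, that the branching locus (the axis) does not meet the interior of any face-pair in a way that breaks the orbit structure. In the unwound construction the axis sits in the $1$-skeleton (it is the image of the polar axis, covered by the longitudinal edges), so the $2$-cells should be permuted freely, but this must be argued from the explicit picture in Figure~\ref{fig-tbf22} rather than asserted. A secondary bookkeeping point is to confirm that a single word $W$ suffices: since all face-pairs in the base lie in one ``column'' between consecutive longitudes and the edge cycles of the base face-pairing are themselves permuted among the longitudinal/latitudinal families, one may need to conjugate generators by a fixed word and reindex so that every relator in the lifted presentation is literally $\phi^t(W)$. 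I would organize the proof so that once the free $\ZZ/n$-action on face-pairs and edge cycles is established, the cyclic form of the presentation is immediate from $\pi_1$ of the pseudo-Heegaard splitting and the equivariance of the handle-curve words under $\zth$.
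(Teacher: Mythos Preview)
Your symmetry argument establishes only that $\pi_1(M_n(K_m))$ has a $\ZZ/n$-\emph{equivariant} presentation, not a cyclic one. The unwound face-pairing has $n(k+1)$ face-pairs (one for each latitude band in each longitudinal sector), so your generating set has size $N=n(k+1)$, and the deck transformation $\zth$ permutes it in $k+1$ orbits of length $n$. Likewise the relators fall into $k+1$ distinct $\zth$-orbits: using the paper's indexing, the words $R(0,j),R(1,j),\dots,R(k,j)$ have genuinely different shapes and are not $\phi$-translates of one another. A cyclic presentation, by the definition given, requires a single word $W$ on $n$ generators whose $n$ cyclic shifts give all the relators. What you call a ``secondary bookkeeping point'' is therefore the heart of the matter, and conjugating or reindexing will not collapse $k+1$ inequivalent relator types into one orbit.

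The paper closes this gap by an equivariant Tietze reduction that relies on the specific hypothesis $\ell_i=\pm 1$ on the latitudinal multipliers. Because $\ell_{i+1}=\pm 1$, the generator $x(i+1,j)$ appears exactly once in $R(i,j)$, so $R(i,j)=1$ can be solved for $x(i+1,j)$ in terms of generators with smaller first index. Doing this for $i=0,1,\dots,k-1$ eliminates all generators except $x(0,1),\dots,x(0,n)$ and all relators except $R(k,1),\dots,R(k,n)$; since the elimination is performed uniformly in $j$, the surviving relators are exactly the $n$ cyclic shifts of a single word in $x(0,1),\dots,x(0,n)$. Your outline does not invoke the condition $\ell_i=\pm 1$ anywhere, and without it (say if some latitudinal multiplier were $2$) the elimination step would fail and no cyclic presentation is apparent. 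So the missing idea is this equivariant solvability, not the $\ZZ/n$ symmetry itself.
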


Before giving the proof, we recall the algorithm that gives a presentation for the fundamental group of the bi-twist manifold $M(\zep, m)$. We work with the model faceted 3-ball. We assign a generator $x(f)$ to each face $f$. We will need to assign a word $W(f,e)$ to each pair $(f,e)$ consisting of a face $f$ and boundary edge $e$ of $f$, and a word $W(f)$ to each face $f$. 

If $f$ is a face, denote the matching face by $f^{-1}$. Then $x(f^{-1}) = x(f)\inv$. If $f$ is a face and $e$ is a boundary edge of $f$, then there is a
(shortest) finite sequence $(f,e)=(f_1,e_1)$, $(f_2,e_2)$, $\ldots$, $(f_k,e_k)= (f,e)$ such that $\zep(f_i)$ takes $e_i$ onto $e_{i+1}$ and takes $f_i$ onto the face across $e_{i+1}$ from $f_{i+1}$. We define $W(f,e) $ to be the word $ x(f_1)\cdot x(f_2)\cdots x(f_{k-1})$. Finally, if $f$ is a face and $e_1$, $e_2$, $\ldots$, $e_j$ are the edges of $f$, in order, with assigned multipliers $m_1$, $m_2$, $\ldots$, $m_j$, then we assign $f$ the word
\[
W(f) = W(f,e_1)^{m_1}\cdot W(f,e_2)^{m_2}\cdots W(f,e_j)^{m_j}.
\]
The following lemma follows from standard results. See also \cite[Theorem~4.8]{CFP2}. 

\begin{lemma}\label{lemma:Presentation1}
The group $\pi_1(M(\zep,m))$ has presentation
\[
\langle x(f), \text{ $f$ a face }\mid W(f), \text{ $f$ a face}\rangle \]
\end{lemma}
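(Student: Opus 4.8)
The plan is to establish Lemma~\ref{lemma:Presentation1} directly from the van Kampen / Seifert--van Kampen description of $\pi_1$ of a CW complex obtained by gluing a single $3$-cell along a face-pairing, specialized to the bi-twist situation. First I would recall that the quotient $M(\zep,m)=B/\zep_m$, once we subdivide edges and insert stickers as in Section~\ref{sec-trefoil}, is a CW complex with one $3$-cell, one $0$-cell for each vertex class, one $1$-cell for each edge class, and one $2$-cell for each face pair (the faces being identified in pairs by $\zep_m$). The $2$-skeleton alone determines $\pi_1$, so I can discard the $3$-cell. Collapsing a maximal tree in the $1$-skeleton, or more simply using the standard presentation of $\pi_1$ of a $2$-complex with one vertex, gives generators indexed by $1$-cells and relators indexed by $2$-cells; but here it is cleaner to take generators dual to the $2$-cells. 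Since $H_\Gamma$ is a (pseudo-)handlebody with the $\gamma$-curves as handle curves and a single vertex $x$ (after the vertex identifications of $\zep_m$), one sees that $\pi_1$ of the $1$-skeleton of the quotient is free on the face-pair generators $x(f)$, with $x(f^{-1})=x(f)^{-1}$, exactly as in the setup before the lemma.

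Next I would verify that each relator is precisely the word $W(f)$ attached to a face $f$. Reading the boundary of the $2$-cell corresponding to the face pair $\{f,f^{-1}\}$ in the quotient, one traverses the edges $e_1,\dots,e_j$ of $f$ in cyclic order; because of the bi-twisting, crossing the $i$-th edge is not a single generator but the composite dual path determined by the edge cycle through $e_i$, and traversing that cycle $|m_i|$ times (with sign from $m_i$) contributes $W(f,e_i)^{m_i}$. The key combinatorial point is that the ``shortest finite sequence'' $(f,e)=(f_1,e_1),\dots,(f_k,e_k)=(f,e)$ in the statement is exactly the edge cycle of $\zep$ through $e$, and the word $x(f_1)\cdots x(f_{k-1})$ records the sequence of $2$-cells one passes through as one goes once around that edge cycle in the quotient; the exponent $m_i$ then accounts for the $|c_i|\cdot|m_i|$ subdivision together with the one-notch twist, which multiplies the number of times the edge cycle is traversed by $|m_i|$ and fixes the direction by $\mathrm{sign}(m_i)$. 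Concatenating these over all edges of $f$ gives $W(f)=\prod_i W(f,e_i)^{m_i}$, so the $2$-cell boundary word is $W(f)$.

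Finally I would assemble the pieces: by the standard presentation of $\pi_1$ of a $2$-complex (equivalently, by Seifert--van Kampen applied cell by cell, or by \cite[Theorem~4.8]{CFP2}), $\pi_1(M(\zep,m))$ has generators the $x(f)$ for $f$ a face of $B$, one relation $x(f)x(f^{-1})=1$ for each face pair, and one relation $W(f)=1$ for each $2$-cell; using $x(f^{-1})=x(f)^{-1}$ to eliminate half the generators leaves exactly $\langle x(f)\mid W(f)\rangle$ as claimed, with the understanding that $f$ ranges over one representative of each face pair (or over all faces, with $W(f^{-1})$ redundant). I expect the main obstacle to be the bookkeeping in the second paragraph: matching the twisted identification pattern of sub-edges and stickers to the exponentiated dual words $W(f,e)^{m_i}$ requires care with orientations and with the sign conventions fixed in Section~\ref{sec-trefoil}, and it is here that one must check that stickers (which split or absorb sub-edges) do not alter the homotopy class of the boundary word. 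Everything else is the routine translation of a one-$3$-cell CW structure into a group presentation, which is why the statement is phrased as following ``from standard results.''
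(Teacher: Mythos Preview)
The paper does not actually prove this lemma; it states that it ``follows from standard results'' and refers to \cite[Theorem~4.8]{CFP2}. Your overall strategy---derive the presentation from the CW/Heegaard structure of $M(\zep,m)$ via van Kampen---is precisely the route behind that reference, which you also cite, so at the level of approach you are in agreement with the paper.

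There is, however, a confusion in your sketch between the primal and dual cell structures that would need to be repaired before the argument is complete. You write that ``$\pi_1$ of the $1$-skeleton of the quotient is free on the face-pair generators $x(f)$,'' invoking $\Hc$; but $\Hc$ is a neighborhood of the image of $\Zc$, and its fundamental group is free on \emph{edge-class} generators, not face generators. The free group on the $x(f)$ is $\pi_1(\Hd)$, the \emph{dual} handlebody. Similarly, you describe each relator as ``the boundary of the $2$-cell corresponding to the face pair $\{f,f^{-1}\}$,'' but in the primal structure that boundary is a word in edges, not faces; the words $W(f)$ arise instead as the meridian curves of $\Hc$ (for the twisted pairing $\zep_m$) read off in the face generators of $\Hd$. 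The genuine combinatorial content---and the substance of \cite[Theorem~4.8]{CFP2}---is that tracing an edge cycle of $\zep_m$ and recording the faces crossed yields exactly $\prod_i W(f,e_i)^{m_i}$, the exponents $m_i$ coming from the subdivision into $|c_i|\cdot|m_i|$ sub-edges together with the one-notch twist. Your second paragraph is reaching for this, but framing it as a primal $2$-cell boundary obscures the mechanism. Once you keep straight which handlebody supplies generators and which supplies relators, the remainder of your outline is sound.
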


\begin{proof}[Proof of Theorem~\ref{thm-Gn}]
Begin with a model faceted 3-ball and multipliers $\ell_0, m_0, \dotsc, \ell_k, m_k$ used to construct $M_1(K_m)$ in Section~\ref{sec-generalsurgery}.  We take its $n$-fold branched cyclic cover branched over the north-south axis.  We label the faces of the northern hemisphere $x(i,j)$ as in Figure~\ref{fig-tbf22}.

\begin{figure}
  \import{figures/}{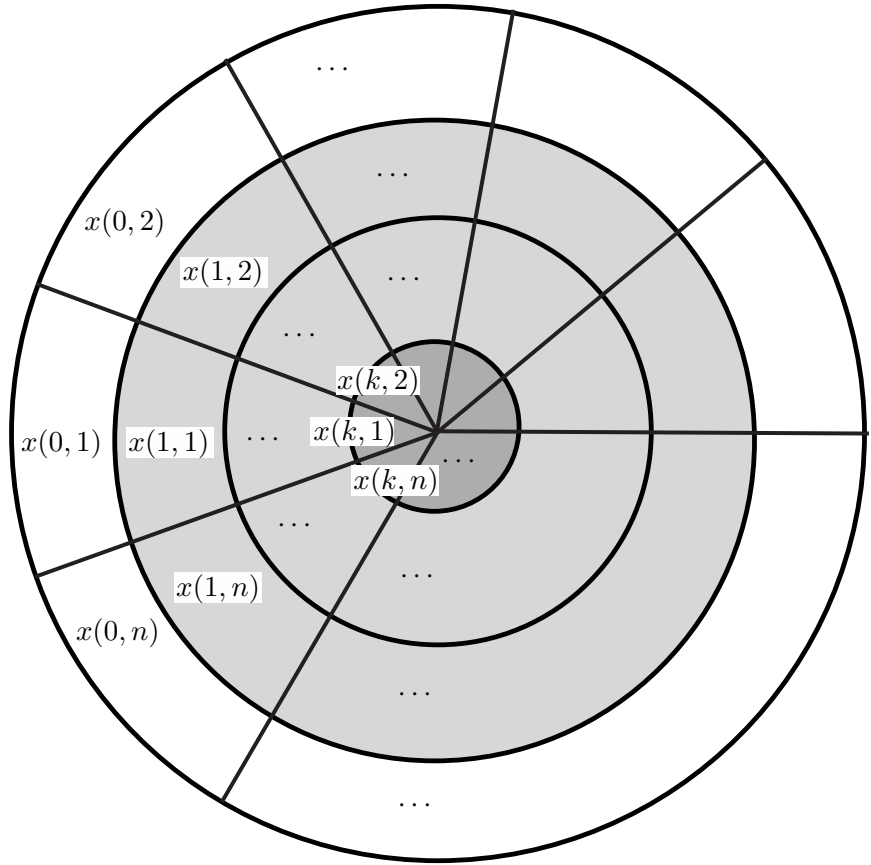}
\caption{The model for the $n$-fold branched cyclic cover, with the face
generators labeled $x(i,j)$. Faces of type $0$ are shaded white, faces of type $1$ are shaded light gray, and faces of type $2$ are shaded darker gray.}
\label{fig-tbf22}
\end{figure}

\begin{figure}
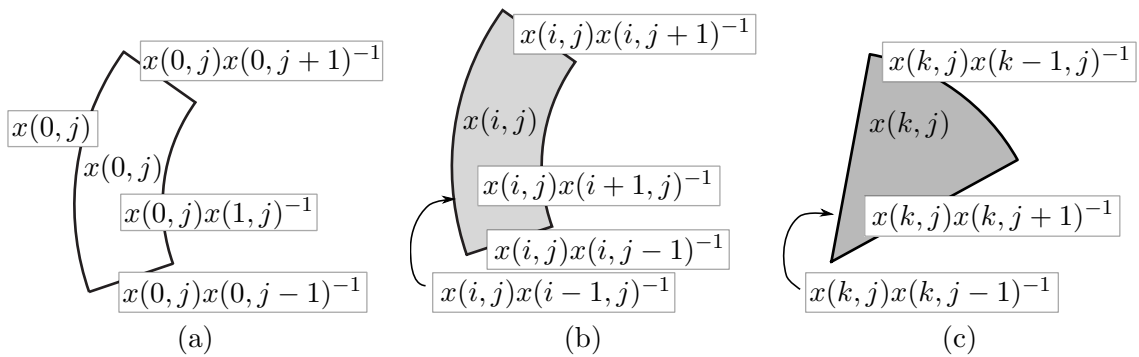

  \begin{tabular}{ccc}
    \import{figures/}{tbf23-type0.pdf_tex} &
    \import{figures/}{tbf24-type1.pdf_tex} &
    \import{figures/}{tbf25-type2.pdf_tex} \\
    (a) & (b) & (c) \\
  \end{tabular}
\caption{(a) A face of type $0$, with face-edge words. (b) A face of type $1$. (c) A face of type $2$.}
\label{fig-tbf23}
\end{figure}

We use the same labels $x(i,j)$ as group generators. The corresponding faces and generators for the southern hemisphere are $x(i,j)\inv$. We distinguish three types of faces: those bordering on the equator, which are designated as type 0, those touching the poles, which are designated as type 2, and all others, designated type 1.  We initially assume that $k>0$ so that we don't have faces that are both type 0 and type 2.  Since edge classes have size 1 or size 2, the words associated with a face-edge pair have length 1 or length 2.
Figure~\ref{fig-tbf23} 
shows edges of the three types of faces labeled with those face-edge words. These words are then raised to the appropriate powers and multiplied together to give the word associated with the corresponding face. We call these words $R(i,j)$'s since they are the relators of the fundamental group.
\begin{align*}
R(0,j) &= [x(0,j)]^{\ell_0} [x(0,j)x(0,j+1)^{-1}]^{m_0} [x(0,j)x(1,j)^{-1}]^{\ell_1} [x(0,j)x(0,j-1)^{-1}]^{m_0} \\
R(i,j) &= [x(i,j)x(i-1,j)^{-1}]^{\ell_i} [x(i,j)x(i,j+1)^{-1}]^{m_i}
[x(i,j)x(i+1,j)^{-1}]^{\ell_{i+1}} [x(i,j)x(i,j-1)^{-1}]^{m_i} \\
R(k,j) &= [x(k,j)x(k-1,j)^{-1}]^{\ell_k} [x(k,j)x(k,j+1)^{-1}]^{m_k}
[x(k,j)x(k,j-1)^{-1}]^{m_k}
\end{align*}

We conclude that the fundamental group has a presentation
\[
\langle x(i,j) \mid R(i,j),\, i=0,\ldots,k;\,j=1,\ldots, n\rangle.
\]
Since each of the multipliers $\ell_0,\ell_1,\dots,\ell_k$ is either $+1$ or $-1$, the letter $x(1,j)^{\pm1}$ appears at most once in the relator $R(0,j)$. Similarly, the letter $x(i,j)^{\pm1}$ appears at most once in the relator $R(i-1,j)$, for $i=2, \dots, k-1$, and the letter $x(k,j)^{\pm1}$ appears at most once in the relator $R(k-1,j)$. Hence, these relators may be solved for $x(1,j), x(2,j), \ldots, x(k,j)$ iteratively, and then these relators and generators may be removed. The only generators remaining are the generators $x(0,j)$, with $j=1,\ldots,n$; and, with appropriate generator substitutions made, the only remaining relators are the relators $R(k,j)$. The presentation
\[
\langle x(0,j)\mid R(k,j),\, j=1,\ldots, n\rangle
\]
is clearly a cyclic presentation.

Finally, if $k=0$, then every face is both type 0 and type 2. In this case the presentation is $\langle x(0,j) | R(0,j), j=1,\dots,n \rangle$, which is cyclic.
\end{proof}

\subsection{The Fibonacci and Sieradski manifolds}\label{sec:fibsier}

We illustrate these group calculations with the Fibonacci manifolds and the Sieradski manifolds. We prove the following theorem. 

\begin{theorem}\label{thm:FnSnPresentation}
The fundamental group $\pi_1(F_n)$ is the $2n$-th Fibonacci group with presentation
\[
\langle x_1,\ldots,x_{2n}\mid x_1x_2=x_3,\,x_2x_3=x_4,\,\ldots,\,x_{2n-1}x_{2n}=x_1,\,x_{2n}x_1=x_2\rangle.
\]
The fundamental group $\pi_1(S_n)$ is the $n$-th Sieradski group with presentation
\[
\langle y_1,\ldots,y_n\mid y_1 = y_2y_n,\,y_2 =
y_3y_1,\,y_3=y_4y_2,\,\ldots,\,y_n = y_1y_{n-1}\rangle.
\]
\end{theorem}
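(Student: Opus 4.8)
The plan is to specialize the general fundamental-group computation of Theorem~\ref{thm-Gn} (and Lemma~\ref{lemma:Presentation1}) to the two simplest model face-pairings, namely the ones with $k=0$, a single longitude, and multipliers $m=(1,1)$ for the figure-eight case and $m=(-1,1)$ for the trefoil case. By Theorem~\ref{thm:trefoil}, for $n=1$ these give $F_1=\Sth$ with north-south axis the figure-eight knot and $S_1=\Sth$ with axis the trefoil; unwinding $n$ times about the unknotted axis (as in Figure~\ref{fig-tbf22}) produces the $n$-fold branched cyclic covers $F_n$ and $S_n$. So the whole theorem reduces to reading off the cyclic presentation produced by the algorithm preceding Theorem~\ref{thm-Gn} in the case $k=0$, where every face is simultaneously of type $0$ and type $2$, and then recognizing the result as the advertised Fibonacci or Sieradski presentation after a change of generators.

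Concretely, first I would set up the $n$-fold cover of the simplest model: its northern hemisphere has $n$ faces $x(0,1),\dots,x(0,n)$ around the longitude, with southern faces $x(0,j)^{-1}$, and the relevant edge cycles are $c_0=\{e\}$ (the equator, with latitudinal multiplier $\ell_0$) and, for each $j$, a longitudinal edge cycle with multiplier $m_0$. I would then compute the face-edge words $W(f,e)$ directly from the combinatorics: each face of this ``bigon'' type has four boundary edges after counting the equatorial edge, the two longitudinal edges on its left and right, so that (cf.\ the displayed formula for $R(0,j)$ with $k=0$) the relator is
\[
R(0,j)=[x(0,j)]^{\ell_0}\,[x(0,j)x(0,j+1)^{-1}]^{m_0}\,[x(0,j)x(0,j-1)^{-1}]^{m_0},
\]
with subscripts mod $n$. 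For the figure-eight case $\ell_0=m_0=1$ this is $x(0,j)\cdot x(0,j)x(0,j+1)^{-1}\cdot x(0,j)x(0,j-1)^{-1}$, i.e.\ $x(0,j-1)=x(0,j)x(0,j)x(0,j+1)^{-1}\cdot$(rearranged); for the trefoil case $\ell_0=1$, $m_0=-1$ one gets instead $x(0,j)\cdot x(0,j+1)x(0,j)^{-1}\cdot x(0,j-1)x(0,j)^{-1}$.

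Next I would perform the change of variables that turns these $n$-generator, $n$-relator cyclic presentations into the stated ones. For $S_n$ this is essentially a relabeling $y_j=x(0,j)$ (possibly with a shift and/or inversion of indices) together with one Tietze rearrangement of each relator into the form $y_j=y_{j+1}y_{j-1}$, matching $\langle y_1,\dots,y_n\mid y_j=y_{j+1}y_{j-1}\rangle$. For $F_n$ the subtlety is the doubling: the Fibonacci group $F(2,2n)$ has $2n$ generators, so I would introduce auxiliary generators interpolating between the $x(0,j)$ — for instance setting $x_{2j-1}$ and $x_{2j}$ from $x(0,j)$ and a partial product appearing in $R(0,j)$, or equivalently splitting each relator of length $6$ into two relators of length $3$ by adding a generator for the midpoint — so that the single relation $x(0,j-1)=x(0,j)^2 x(0,j+1)^{-1}$ becomes the consecutive pair $x_ix_{i+1}=x_{i+2}$, $x_{i+1}x_{i+2}=x_{i+3}$ of the standard Fibonacci presentation $\langle x_1,\dots,x_{2n}\mid x_ix_{i+1}=x_{i+2}\rangle$. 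I would verify this is a sequence of Tietze transformations (adding a defining generator, substituting, and splitting a relator) so no information is lost.

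The main obstacle is bookkeeping rather than conceptual: getting the orientation conventions, the cyclic ordering of edges around each face, and the sign of each multiplier exactly right so that the relator $R(0,j)$ comes out with the correct word (in particular, distinguishing $F_n$ from $S_n$ hinges on the single sign $m_0=\pm1$, and the Fibonacci presentation is sensitive to whether one reads $x_ix_{i+1}=x_{i+2}$ or its inverse/reverse). I would pin this down using Figure~\ref{fig-tbf23} and the face-edge word recipe, and cross-check against the known $n=1$ cases (where $F_1=S_1=\Sth$ forces the group to be trivial, a useful sanity check on signs). Once the relator is correctly identified, the recognition of the Fibonacci and Sieradski presentations — including the generator-doubling for $F_n$ — is a short explicit Tietze calculation, and the identification of $F_n$, $S_n$ as the $n$-fold branched cyclic covers of the figure-eight and trefoil follows from Theorem~\ref{thm:trefoil} together with the unwinding description already set up in Section~\ref{sec-facepairing}.
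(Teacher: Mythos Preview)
Your plan is essentially the paper's own proof: specialize the $k=0$ relator $R(0,j)$ from the algorithm of Theorem~\ref{thm-Gn}, then for $F_n$ introduce intermediate generators (the paper sets $x(2j-1)=x(2j-2)^{-1}x(2j)$) and for $S_n$ relabel to obtain the standard presentations. One small slip: for the trefoil the latitudinal multiplier is $-1$ and the longitudinal one is $+1$, i.e.\ $\ell_0=-1$, $m_0=+1$ in the notation of Theorem~\ref{thm-Gn} (not the reverse), which gives the relator $y(j)^{-1}\,y(j)y(j+1)^{-1}\,y(j)y(j-1)^{-1}$ and hence $y(j)=y(j+1)y(j-1)$ directly; your cross-check at $n=1$ would have caught this.
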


\begin{proof}
The faceted 3-ball that serves as the model for the face pairings is the same for both manifolds; it is as in Figure~\ref{fig-tbf22} with $k=0$, so without interior latitudinal circles.

For the Fibonacci manifolds, we label the faces of the northern hemisphere as $x(2)$, $x(4)$, $\ldots$, $x(2n)$. All subscript calculations are modulo $2n$. We obtain the following cyclic presentation for the fundamental group:
\begin{align*}
\langle x(2),\,x(4),\,\ldots,\,x(2n)\mid 
x(2j)\cdot [x(2j)x(2j+2)\inv]\cdot [x(2j)x(2j-2)\inv], \, j=1,2,\ldots,\,n\rangle.
\end{align*}
We can then introduce intermediate generators $x(2j-1) = x(2j-2)\inv\cdot x(2j)$. The presentation becomes the standard presentation for the $2n$-th
Fibonacci group, as desired:
\[
\langle x(1),\,\ldots,\,x(2n)\mid x(i+2)=x(i)\cdot x(i+1) \rangle.
\]

For the Sieradski manifolds, we label the faces of the northern hemisphere as $y(1)$, $y(2)$, $\ldots$, $y(n)$. Subscript calculations are modulo $n$. We obtain the following cyclic presentation for the fundamental group:
\[
\langle y(1),\ldots,y(n)\mid y(j)\inv\cdot [y(j)y(j+1)\inv] \cdot [y(j)y(j-1)\inv], j=1,\dotsc,n\rangle,
\]
or, reversing the order of the subscripts so that $x(1) = y(n),\,\ldots,\, x(n) = y(1)$,
\[
\langle x(1),\ldots, x(n)\mid  x(i) = x(i-1)\cdot x(i+1)\rangle,
\]
the standard presentation for the $n$th Sieradski group.
\end{proof}

\subsection{Branched cyclic covers with periodic homology}

In this section we consider first homology groups of our cyclic branched covers of $\Sth$.  This is a topic which has received and still receives considerable attention.  There are two very different behaviors.  The first homology groups of the $n$-fold cyclic covers $M_n$ of $\Sth$ branched over a knot $K$ are either periodic in $n$ or their orders grow exponentially fast.  Specifically, Gordon~\cite{Gor72} proved that when the roots of the Alexander polynomial of $K$ are all roots of unity, then $\text{H}_1(M_n,\mathbb{Z})$ is
periodic in $n$.  Riley~\cite{Ril90} and, independently, Gonzalez-Acuna and Short~\cite{Gon91} proved that if the roots of the Alexander polynomial are not all roots of unity, then the finite values of $\text{H}_1(M_n,\mathbb{Z})$ grow exponentially fast in $n$. Silver and Williams~\cite{Sil02} extended these results to links and replaced ``finite values'' with ``orders of torsion subgroups''.  See also Le~\cite{Le09}, Bergeron and Venkatesh~\cite{Ber13} and Brock and Dunfield~\cite{Bro13} for more recent results and conjectures on this
topic.

We are particularly fascinated by the first homology of the branched cyclic covers of $\Sth$ branched over the knots that are  two-strand braids. These knots are the only two-bridge knots that are not hyperbolic.

The northern hemisphere of the model before bi-twisting looks like
Figure~\ref{fig-tbf22}.  We construct the $n$-fold branched cyclic cover of $\Sth$, branched over a knot that is a $2$-strand braid, by using $k\ge 0$ latitudes and $n$ longitudes in the open northern hemisphere, assigning multipliers $-1$ to the latitudinal edges, and assigning multipliers $+1$ to all longitudinal edges. We calculate the fundamental group as in the proof of Theorem~\ref{thm-Gn} and transform it into a cyclic presentation as explained there. We then abelianize, and let $a_0,\,a_1,\,\ldots,\, a_{2k+2}$ denote the
exponent sums of the generators in the defining cyclic word $W$.

We very briefly indicate by diagram how these integers may be computed.  Every relator corresponds to a diagram as follows.
\begin{equation*}
\begin{array}{cc|ccc}
 & & j-1 & j & j+1 \\ \cline{2-5}
R(k,j) &  k & 1 & -1 & 1 \\
 & k-1 & & -1 &
\end{array}
  \end{equation*}
  \begin{equation*}
\begin{array}{cc|ccc}
 & & j-1 & j & j+1 \\ \cline{2-5}
 & i+1 &  & -1 &  \\
R(i,j) &  i & 1 & 0 & 1 \\
 & i-1 & & -1 &
\end{array}
  \end{equation*}
  \begin{equation*}
\begin{array}{cc|ccc}
 & & j-1 & j & j+1 \\ \cline{2-5}
 & 1 &  & -1 &  \\
R(0,j) &  0 & 1 & 0 & 1 
\end{array}
  \end{equation*}
We begin with the diagram for $R(k,j)$ and use the diagrams for $R(k-1,j)$, $R(k-2,j)$, $\ldots$ to successively transform the entries in rows $k$, $k-1$, $\ldots, 1$ to 0.  The defining cyclic word is the final result in row 0.

\begin{equation*}
\begin{aligned}
\begin{matrix}  1 & -1 & 1 \\
 & -1 &
\end{matrix} &
\longrightarrow 
\begin{matrix} & 0 & 0 & 0 & \\
1 & -1 & -1+1+1 & -1 & 1\\
  & -1 & 1 & -1 &
\end{matrix}\\ &
\phantom{x}\\ &
\longrightarrow 
\begin{matrix} & & 0 & 0 & 0 & &\\
  & 0 & 0 & 0 & 0 & 0 & \\
1 & -1 & -1+1+1 & 1-1+1 & -1+1+1 & -1 & 1\\
  & -1 & 1 & -1 & 1 & -1
\end{matrix}\\ &
\longrightarrow \quad\ldots
\end{aligned}
\end{equation*}
We find that the polynomial $a_0+a_1\cdot t+ \cdots + a_{2k+2}\cdot t^{2k+2}$ is the cyclotomic polynomial
\[
1 - t + t^2 -t^3 + \cdots -t^{2k+1}+t^{2k+2}.
\]
(If $2k+3 > n$, then the polynomial folds on itself because powers are to be identified modulo $n$. However, once $n\ge 2k+3$, there is no folding.) 

\begin{remark}
The computation indicated by diagram is a continued fraction algorithm.  For the fundamental group of a general two-bridge knot, the corresponding polynomial may be taken to be the numerator of the continued fraction
\begin{equation*}
Q_0-\cfrac{1}{Q_1-\cfrac{1}{Q_2-\cfrac{1}{\ddots- \cfrac{1}{Q_k}}}}
\end{equation*}
where
\begin{equation*}
Q_i(t) = m_it-(\ell _i+\ell _{i+1}+2m_i)+m_it^{-1}\quad\text{for }0\le i\le k-1
\end{equation*}
and
\begin{equation*}
Q_k(t) = m_kt-(\ell _k+2m_k)+m_kt^{-1}.
\end{equation*}
\end{remark}

We shall prove that, for a given knot realized as a two-strand braid, the abelianizations of the fundamental group of the $n$-fold branched cover are periodic functions of $n$. However, as a warm up, we use row reduction of the presentation matrix to prove the much easier theorem that no two of the Fibonacci groups $F(n)$ are isomorphic for $n>1$ since no two of the abelianizations have the same order. This problem appears as an exercise on page $35$ of~\cite{Joh76}, where Johnson suggests using the two-variable presentation of the group. We use the $n$-variable presentation and note that the Fibonacci numbers $f_0=0, f_1=1, f_2=1, f_3=2, \ldots$ appear in a very natural way.  In this case we have the behavior of exponential growth of orders.

\begin{theorem}\label{thm:Fibonacci-growth}
Let $F(n) = \langle x_1,\,x_2,\,\ldots,\,x_n\mid x_ix_{i+1}=x_{i+2} \text{ for all $i$}\rangle$, with subscripts calculated modulo $n$. For odd $n$, the order of the abelianization is the sum $f_{n-1} + f_{n+1}$ of two Fibonacci numbers. For even $n$, the order is $f_{n-1} + f_{n+1} - 2$.
\end{theorem}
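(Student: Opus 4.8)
The plan is to pass to the abelianization and row-reduce its integral presentation matrix. Abelianizing the cyclic relator $x_ix_{i+1}x_{i+2}\inv$ turns it into the linear relation $x_i+x_{i+1}-x_{i+2}=0$, so $\mathrm{H}_1(F(n);\ZZ)$ is the cokernel of the $n\times n$ integer circulant matrix whose $i$-th row has entries $1,1,-1$ in columns $i,i+1,i+2$ (indices mod $n$) and is zero elsewhere. Integer row and column operations — equivalently, Tietze transformations on the presentation — do not change the isomorphism type of this cokernel, so I would first use the relations $R_1,\dots,R_{n-2}$, in that order, to eliminate the generators $x_3,\dots,x_n$: since $R_i$ reads $x_{i+2}=x_i+x_{i+1}$, an immediate induction gives
\[
x_k=f_{k-2}\,x_1+f_{k-1}\,x_2\qquad(1\le k\le n),
\]
where $f_0=0$, $f_1=1$, and $f_{-1}=1$ are the Fibonacci numbers. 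This is exactly where the Fibonacci numbers enter, as anticipated in the discussion preceding the statement.

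After this elimination only the relations $R_{n-1}$ and $R_n$ survive. Substituting $x_k=f_{k-2}x_1+f_{k-1}x_2$ into $R_{n-1}\colon x_{n-1}+x_n-x_1=0$ and $R_n\colon x_n+x_1-x_2=0$, and simplifying using $f_{k-2}+f_{k-1}=f_k$, I obtain the $2\times2$ presentation matrix
\[
M=\begin{pmatrix} f_{n-1}-1 & f_n\\ f_{n-2}+1 & f_{n-1}-1\end{pmatrix},
\]
so that $\mathrm{H}_1(F(n);\ZZ)\cong\ZZ^2/M\,\ZZ^2$, which is finite of order $|\det M|$ as soon as $\det M\ne 0$.

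Finally I would evaluate
\[
\det M=(f_{n-1}-1)^2-f_n(f_{n-2}+1)=\bigl(f_{n-1}^2-f_nf_{n-2}\bigr)-2f_{n-1}-f_n+1.
\]
Cassini's identity in the form $f_{n-1}^2-f_nf_{n-2}=(-1)^n$, together with $2f_{n-1}+f_n=f_{n-1}+f_{n+1}$, gives $\det M=(-1)^n+1-(f_{n-1}+f_{n+1})$. For odd $n$ this is $-(f_{n-1}+f_{n+1})$, and for even $n\ge 2$ it is $2-(f_{n-1}+f_{n+1})<0$; in either case $\det M\ne 0$, so $|\mathrm{H}_1(F(n);\ZZ)|$ equals $f_{n-1}+f_{n+1}$ when $n$ is odd and $f_{n-1}+f_{n+1}-2$ when $n$ is even (the degenerate case $n=1$, where $F(1)$ is trivial, matches $f_0+f_2=1$ directly). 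The one point requiring genuine care is the index bookkeeping in the elimination step and getting the sign right in Cassini's identity; everything else is routine linear algebra over $\ZZ$, and, following the paper's suggestion, I would present the elimination as explicit row reduction of the circulant matrix so that the Fibonacci numbers visibly appear in the intermediate matrices.
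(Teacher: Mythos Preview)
Your argument is correct and lands on the same determinant computation as the paper, but the reduction step is organized dually. The paper works directly with the $n\times n$ circulant relator matrix and uses row operations to sweep the lower-left triangle to the right, tracking how the evolving $2\times2$ block picks up entries $(-1)^kf_k,(-1)^{k-1}f_{k-1}$ at each step; the final block is then added to the lower-right $\bigl(\begin{smallmatrix}1&1\\0&1\end{smallmatrix}\bigr)$ and its determinant evaluated via the same Cassini identity you use. You instead eliminate the generators $x_3,\dots,x_n$ via $x_k=f_{k-2}x_1+f_{k-1}x_2$ and substitute into the two surviving relators---in effect passing to the two-generator presentation that the paper explicitly mentions (citing Johnson) as the alternative route. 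Your bookkeeping is a bit cleaner since the Fibonacci recursion is carried by a single explicit formula rather than by an evolving $2\times2$ block, while the paper's version makes more visible that one is simply row-reducing the full matrix; the final $2\times2$ matrices differ in form but have the same determinant up to sign.
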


\begin{remark}
Recall that for even $n$ these abelianizations are the first homology groups of the Fibonacci manifolds. This theorem gives successive orders of  $1$, $1$, $4$, $5$, $11$, $16$, $29$, $45$, $76$, $121$, $\ldots$ for the abelianizations of the Fibonacci groups. It is clear from the definition of the Fibonacci numbers that these numbers are strictly increasing after the numbers $1,1$. These numbers are also known as the \emph{Associated Mersenne numbers} A001350 in Sloane's ``The On-Line Encyclopedia of Integer Sequences''. The sums $f_{n-1} + f_{n+1}$ are also known as Lucas numbers. 
\end{remark}

\begin{proof} The presentation matrix has the following form:
\begin{equation*}
\begin{pmatrix} 1&1&-1&0&0&\cdots&0&0&0\\
                                 0&1&1&-1&0&\cdots&0&0&0\\
                                 0&0&1&1&-1&\cdots&0&0&0\\
                                   &  & \cdots &  &   &\cdots
&&\cdots&\\
                                 0&0&0&0&0&\cdots&1&1&-1\\
                                 -1&0&0&0&0&\cdots&0&1&1\\
                                 1&-1&0&0&0&\cdots&0&0&1\\
\end{pmatrix}
\end{equation*}
The absolute value of the determinant of this matrix is the order of the abelianization of the group unless the determinant is $0$. In that case, the group is infinite.  The goal is to move the entries in the lower left corner to the right by adding multiples of the upper rows. These operations do not change the determinant.

We use the upper rows in descending order, with each successive row moving the lower-left $2\times 2$ matrix one column to the right. We first trace the evolution of the two entries in the next-to-last row:
\[
(-1,0)\to (1,-1)\to (-2,1)\to(3,-2)\to (-5,3)\to (8,-5)\to \cdots.
\]
The reader will easily identify the first in the $k$th pair as $(-1)^k f_k$, and the second as $(-1)^{k-1} f_{k-1}$. Since the second of these, namely $(1,-1)$, coincides with the first pair in the bottom row, we see that the bottom row evolves just one step ahead of the next-to-last row. Thus after $k$ moves, the $2\times 2$ matrix evolves into the matrix
\[
\begin{pmatrix} (-1)^k f_k& (-1)^{k-1}f_{k-1}\\
                               (-1)^{k+1}f_{k+1}&(-1)^k f_k 
\end{pmatrix}\]
which has determinant $f_k^2 - f_{k+1}\cdot f_{k-1} =(-1)^{k-1}$. After the appropriate number of moves, this matrix will be  added to the matrix 
\[
\begin{pmatrix} 1&1\\
0&1
\end{pmatrix}
\]
from the lower right corner to form the very last lower right corner matrix
\[
\begin{pmatrix} (-1)^k f_k+1& (-1)^{k-1}f_{k-1}+1\\
                               (-1)^{k+1}f_{k+1}&(-1)^k f_k +1
\end{pmatrix}.
\]
The matrix then has determinant
\begin{equation*}
\begin{aligned}[]
[f_k^2 +2\cdot(-1)^k\cdot f_k + 1] & -[f_{k+1}\cdot f_{k-1}
+(-1)^{k+1} f_{k+1}]\\
 & = (-1)^{k+1} + 1 + (-1)^k[f_k + f_{k+2}].
\end{aligned}
\end{equation*}
The absolute value of this determinant is the order of the abelianization, and since the last value of $k$ is $n-1$, it agrees with the value claimed in the theorem.
\end{proof}

For the moment, we fix two integers $j>0$ and $k\ge 0$, and let $G_n$, with $n = j + 1 + k$, denote an Abelian group with generators $x_0, x_1, x_2, \ldots$ such that $x_i = x_{i+n}$ and with relators $a_0\cdot x_i + a_1\cdot x_{i+1} + \cdots + a_j\cdot x_{i+j}$ for each $i$. Then the group has a circulant relator matrix of the form shown in Figure~\ref{fig-tbf28}.  In the following theorem we have the behavior of periodic homology groups.

\begin{figure}[ht]
  \import{figures/}{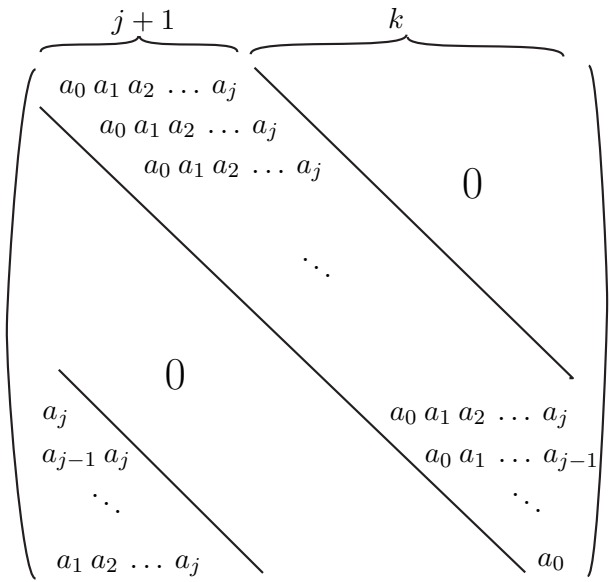}
\caption{The relator matrix for $n=j+1+k$.}
\label{fig-tbf28}
\end{figure}

\begin{theorem}
Let $j$, $k$, and $G_n$ be as immediately above, so that $n = j + 1 + k$. Assume that $p(t) = a_0 + a_1\cdot t + \cdots + a_j\cdot t^j$ is a cyclotomic polynomial, by which we mean that there is a polynomial $q(t) = b_0 + b_1\cdot t + \cdots + b_\ell\cdot t^\ell$ such that $p(t)\cdot q(t) = 1-t^{j+\ell}$.  Then the groups $G_n$ and $G_{n+j + \ell}$ are isomorphic.
\end{theorem}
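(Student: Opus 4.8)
The plan is to recognize each group $G_m$ as the additive group of a cyclic quotient ring of $\ZZ[t]$, and then to reduce the desired isomorphism to an equality of two ideals of $\ZZ[t]$ that follows immediately from the Bézout-type identity $p(t)q(t)=1-t^{j+\ell}$.

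First I would fix $p(t)=a_0+a_1t+\cdots+a_jt^j$ and, for each relevant $m$, identify $G_m$ with the underlying abelian group of
\[
R_m:=\ZZ[t]/\bigl(t^m-1,\ p(t)\bigr)
\]
by sending $x_i\mapsto t^i$. Indeed, the free abelian group on the symbols $\{x_i\}_{i\in\ZZ}$ subject to $x_i=x_{i+m}$ is $\ZZ[t]/(t^m-1)$, and the remaining relations $a_0x_i+a_1x_{i+1}+\cdots+a_jx_{i+j}=0$ say precisely that the coefficient vectors of the elements $t^{i}p(t)$, $i\in\ZZ$, are to be killed; the subgroup they generate is the image of multiplication by $p(t)$ on $\ZZ[t]/(t^m-1)$, and the quotient is $R_m$. (This is just a restatement of the circulant relator matrix of Figure~\ref{fig-tbf28}.) In particular $G_n$ is the additive group of $R_n$ and $G_{n+j+\ell}$ is that of $R_{n+j+\ell}$; note $n>j$ since $k\ge 0$, though this is not needed for the identification.

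It therefore suffices to prove that the ideals
\[
I=\bigl(t^n-1,\ p(t)\bigr)\qquad\text{and}\qquad J=\bigl(t^{\,n+j+\ell}-1,\ p(t)\bigr)
\]
of $\ZZ[t]$ coincide, since then $R_n=R_{n+j+\ell}$ literally, and in particular $G_n\cong G_{n+j+\ell}$. The cyclotomic hypothesis gives $t^{\,j+\ell}-1=-p(t)q(t)\in(p(t))$, and I would then check the two inclusions by the explicit identities
\[
t^{\,n+j+\ell}-1=t^{\,j+\ell}\bigl(t^n-1\bigr)-p(t)q(t)\in I,
\]
which gives $J\subseteq I$, and
\[
t^n-1=\bigl(t^{\,n+j+\ell}-1\bigr)+t^{n}p(t)q(t)\in J,
\]
which gives $I\subseteq J$. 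Hence $I=J$, and the theorem follows.

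The argument is elementary, and I do not expect a genuine obstacle; the only point needing care is that $\ZZ[t]$ is not a principal ideal domain, so one may not ``cancel the unit $t$'' when comparing the two ideals — the displayed polynomial identities are arranged precisely to avoid this. I would also add the clarifying remark that, iterating the same manipulation together with the Euclidean algorithm for $t^a-1$, one obtains $G_m\cong\ZZ[t]/\bigl(t^{\gcd(m,\,j+\ell)}-1,\ p(t)\bigr)$, so $G_m$ depends only on $\gcd(m,\,j+\ell)$; the statement of the theorem is then the special case of the evident equality $\gcd(n,\,j+\ell)=\gcd(n+j+\ell,\,j+\ell)$.
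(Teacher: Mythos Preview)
Your proof is correct and takes a genuinely different route from the paper. The paper works directly with the circulant presentation matrix: it performs explicit integral row operations on the $(n+j+\ell)\times(n+j+\ell)$ relator matrix, using the combination $b_0R_b+b_1R_{b+1}+\cdots+b_\ell R_{b+\ell}$ of rows (which encodes multiplication by $q(t)$) to push each entry in the lower-left triangle $j+\ell$ columns to the right, and then uses that $a_0=\pm 1$ (a consequence of $a_0b_0=1$) to clear via column operations and isolate the $n\times n$ relator matrix for $G_n$ in the lower-right block. Your approach instead recognizes the circulant structure as that of the quotient ring $\ZZ[t]/(t^m-1,p(t))$ and reduces the question to the equality of two ideals, which follows in two lines from $t^{j+\ell}-1=-p(t)q(t)$. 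Your argument is shorter and more conceptual, and it yields the sharper statement $G_m\cong\ZZ[t]/(t^{\gcd(m,\,j+\ell)}-1,\,p(t))$ for free; the paper's matrix approach, on the other hand, makes the underlying Smith-normal-form reduction visible and adapts directly to the variant noted in the paper's subsequent remark where $p(t)$ is only an integer multiple of a cyclotomic polynomial (there the diagonal entries become $\alpha$'s rather than units, and one reads off the extra $\ZZ/\alpha\ZZ$ summands from the reduced matrix).
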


\begin{proof}
We manipulate the relator matrix for $G_{n+j+\ell}$ using integral row and column operations. See Figure~\ref{fig-tbf28}.  We use the rows at the top of the matrix to remove entries from the triangle at the lower left corner of
the matrix.

Let $x$ be such an entry in row $R_a$. Let $R_b$ denote the row whose initial entry on the diagonal is above $x$. Subtract from row $R_a$ the sum $x\cdot [ b_0 \cdot R_b + b_1\cdot R_{b+1} + \cdots + b_\ell \cdot R_{b+\ell}]$. The effect is to move entry $x$ to the right $j+\ell$ places. Similarly, we move all entries in the lower left triangle $j+\ell$ places to the right. 
Because $a_0=\pm 1$, we may use column operations to make every entry to the right of the first $j+\ell $ $a_0$'s equal to 0.  The lower right $n\times n$ block of the resulting matrix is the relator matrix for $G_n$. The theorem follows.
\end{proof}

\begin{remark}
The same calculation can be carried out if the polynomial is any integer multiple $\za \cdot p(t)$ of a cyclotomic polynomial $p(t)$, except that the diagonal entries above the periodic box all become $\za$'s. Thus the abelianization has a periodic component together with an increasing direct sum of $\Z_\za$'s.  It can be shown that these are the only polynomials with these
periodicity properties.
\end{remark}

\begin{corollary}\label{cor:2strandBraidPeriodic}
If $K$ is a knot that is a 2-strand braid and $M_n$ is the $n$-fold cyclic branched cover of $\Sth$ over $K$, then the homology groups $H_1(M_n)$ are periodic in $n$.
\end{corollary}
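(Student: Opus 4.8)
The plan is to identify $H_1(M_n;\ZZ)$ with one of the circulant abelian groups $G_n$ handled by the theorem proved just above, whose defining polynomial is the cyclotomic polynomial already computed in this subsection, and then to read off the period directly from that theorem.

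First I would set up the group. Specializing the construction used to prove Theorem~\ref{thm-Gn} to a knot $K$ that is a two-strand braid --- take $k\ge 0$ latitudes with multiplier $-1$ and $n$ longitudes with multiplier $+1$ --- one obtains $M_n$ as the $n$-fold branched cyclic cover of $\Sth$ over $K$ together with a cyclic presentation of $\pi_1(M_n)$ on the $n$ generators $x(0,1),\dots,x(0,n)$. Abelianizing this presentation produces exactly the group $G_n$ of the preceding theorem, with $n=j+1+k$, $j=2k+2$, and exponent-sum polynomial
\[
p(t)\;=\;a_0+a_1t+\cdots+a_jt^j\;=\;1-t+t^2-t^3+\cdots-t^{2k+1}+t^{2k+2},
\]
which is precisely the polynomial produced by the diagram reduction carried out above. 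Thus $H_1(M_n;\ZZ)\cong G_n$.

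Next I would verify that $p(t)$ is \emph{cyclotomic} in the sense required by that theorem, i.e.\ that $p(t)\,q(t)=1-t^{\,j+\deg q}$ for some polynomial $q(t)$. Since $2k+3$ is odd, $(1+t)\,p(t)=1+t^{2k+3}$; multiplying through by $1-t^{2k+3}$ gives
\[
p(t)\cdot(1+t)(1-t^{2k+3})\;=\;(1+t^{2k+3})(1-t^{2k+3})\;=\;1-t^{2(2k+3)}.
\]
Hence $q(t)=(1+t)(1-t^{2k+3})$ works, with $\ell:=\deg q=2k+4$ and $j+\ell=4k+6$. (Equivalently, all roots of $p$ are roots of unity, so Gordon's theorem~\cite{Gor72} on the Alexander polynomial gives an alternative route to the conclusion.)

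Finally I would apply the preceding theorem with these $j$ and $\ell$: it yields $G_n\cong G_{n+j+\ell}$, that is,
\[
H_1(M_n;\ZZ)\;\cong\;H_1(M_{n+4k+6};\ZZ)
\]
for every $n$ in the range $n\ge j+1=2k+3$ where the circulant relator matrix does not fold on itself. Therefore $H_1(M_n)$ is periodic in $n$, with period dividing $2(2k+3)=4k+6$. The one genuine --- and minor --- obstacle is the finitely many small values $n<2k+3$, where the band of the relator matrix wraps around and the preceding theorem does not literally apply; these must be absorbed into the statement (periodicity here being an eventual phenomenon) or checked by hand. Everything else is carried by the single identity $(1+t)\,p(t)=1+t^{2k+3}$ together with the preceding theorem.
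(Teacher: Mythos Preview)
Your proof is correct and follows essentially the same route as the paper: both identify $H_1(M_n)$ with $G_n$, invoke the preceding theorem, and exhibit the required $q(t)$ --- indeed your general formula $q(t)=(1+t)(1-t^{2k+3})=1+t-t^{2k+3}-t^{2k+4}$ expands to exactly the polynomials the paper writes out case by case (e.g.\ $1+t-t^3-t^4$ for the trefoil), yielding the same period $4k+6$. Your explicit caveat about the finitely many small values $n<2k+3$ is, if anything, more careful than the paper's own treatment.
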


\begin{remark}
Lambert, in his Ph.D.\ dissertation at Brigham Young University~\cite{Lam10}, explicitly calculated all of the homology groups of the branched cyclic covers of $\Sth$, branched over knots that are $2$-strand braids. These are the only two-bridge knots that are not hyperbolic. His tables give an explicit picture of the periodicity we have just proved. Rolfsen~\cite{Rol76} notes that the period for the trefoil is $6$. We shall also see that as follows.
\end{remark}

\begin{proof}[Proof of Corollary~\ref{cor:2strandBraidPeriodic}]
It suffices to find the appropriate polynomials $q(t)$, and thereby determine the period. If $p(t) = 1 - t + t^2$, as for the trefoil, then the appropriate $q(t)$ of smallest degree is $q(t) = 1 + t - t^3 - t^4$ so that the period is $2 + 4 = 6$. With $5$ half twists, $p(t) = 1 - t + t^2 - t^3 + t^4$ and $q(t) = 1 + t - t^5-t^6$ and the period is $4+6=10$. Each added pair of half twists in the braid adds two terms to $p(t)$, multiplies the negative entries of $q(t)$ by $t^2$, and increases the period by $4$. 
\end{proof}

\begin{remark}
By Gordon~\cite{Gor72}, the homology groups $H_1(M_n)$ of the cyclic branched covers $M_n$ of the complement of a knot $K$ are periodic with period dividing $m$ if and only if the first Alexander invariant (the quotient of the first two Alexander polynomials) of $K$ is a divisor of the polynomial $t^m -1$. Furthermore, if the first Alexander invariant is a divisor of $t^m-1$ and $n$ is a positive integer, then $H_1(M_n) = H_1(M_{(m,n)})$, where $(m,n)$ is the greatest common divisor of $m$ and $n$. Since the first Alexander invariant of the trefoil knot is $1 -t + t^2$, which divides $t^6-1$, Gordon's theorem shows that the first homology groups of the cyclic branched covers of the trefoil
knot are periodic with period $6$ and $H_1(S_{6j+2}) = H_1(S_{6j+4})$ for all $j$.
\end{remark}

We may use the calculation of the period of the trefoil in establishing the following theorem.

\begin{theorem}\label{thm:siergps}
No two of the Sieradski groups are isomorphic.  Hence no two of the branched cyclic covers of $\Sth$, branched over the trefoil knot, are homeomorphic.
\end{theorem}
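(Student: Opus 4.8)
The plan is to recover $n$ --- or, in the finitely many cases where $G(n):=\pi_1(S_n)$ is finite, its order --- from the isomorphism type of the $n$-th Sieradski group, using the geometry of the branched cover. As noted before Theorem~\ref{thm:FnSnPresentation}, $S_n$ is the $n$-fold cyclic branched cover of $\Sth$ over the trefoil, which is the $(2,3)$--torus knot; hence $S_n$ is the Brieskorn manifold $\Sigma(2,3,n)$, Seifert fibered over the orbifold $\St(2,3,n)$ (a $2$--sphere with three cone points of orders $2$, $3$, $n$), with $\pi_1(S_n)=G(n)$. Milnor's structural description of these spaces then provides a cyclic normal subgroup $Z$ of $G(n)$, generated by a regular fiber, with $G(n)/Z\cong\Delta(2,3,n)$, the $(2,3,n)$ triangle group; moreover $Z$ is finite exactly when $\tfrac12+\tfrac13+\tfrac1n>1$, i.e.\ when $n\le 5$, and $Z$ is infinite cyclic when $n\ge 6$. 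In the latter case the base orbifold is hyperbolic (for $n\ge 7$) or Euclidean (for $n=6$), so $\Delta(2,3,n)$ is centerless; since $Z$ is central (because $S_n$ is an orientable Seifert fibered space over an orientable base orbifold), it follows that $Z=Z(G(n))$ for all $n\ge 6$.

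Granting this, I would argue in three blocks. For $n\le 5$ the group $G(n)$ is finite; a direct count from the presentation of Theorem~\ref{thm:FnSnPresentation} --- equivalently Milnor's order formula, which here gives $|G(n)|=24n/(6-n)^2$ for $2\le n\le 5$ --- yields the orders $1,3,8,24,120$, which are distinct; so no two of $G(1),\dots,G(5)$ are isomorphic, and each differs from every $G(m)$ with $m\ge 6$, which is infinite. For $n=6$ the triangle group $\Delta(2,3,6)$ is crystallographic, so $G(6)$ is virtually nilpotent; for $n\ge 7$ the Fuchsian group $\Delta(2,3,n)$ contains a nonabelian free group $F$, and since $H^{2}(F;\ZZ)=0$ the central extension of $F$ pulled back from $1\to Z\to G(n)\to\Delta(2,3,n)\to 1$ splits, so $G(n)$ contains $F$ and hence is not virtually nilpotent; this separates $G(6)$ from all $G(n)$ with $n\ge 7$. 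Finally, for $n\ge 7$ the integer $n$ is an invariant of $G(n)$ alone: since $Z=Z(G(n))$, the quotient $\Delta(2,3,n)=G(n)/Z(G(n))$ is determined by $G(n)$, and in a cocompact Fuchsian group every finite-order element is conjugate into one of the finitely many maximal cyclic cone subgroups --- here of orders $2$, $3$, $n$ --- so $n$ is the largest order of a torsion element of $\Delta(2,3,n)$. Hence $G(m)\not\cong G(n)$ for $7\le m<n$, and no two Sieradski groups are isomorphic. (The period-$6$ periodicity of $H_1(S_n)$ established above gives a convenient first cut: $H_1(S_n)\cong H_1(S_{\gcd(n,6)})$ already distinguishes Sieradski groups in different classes $\gcd(n,6)\in\{1,2,3,6\}$, leaving only the finitely many within-class comparisons just carried out.) The statement about branched covers follows at once, since homeomorphic closed $3$--manifolds have isomorphic fundamental groups.

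The main obstacle is the single structural fact that makes the rest go through: that the Seifert ``fiber'' subgroup $Z$ is detectable purely group-theoretically --- precisely, that it coincides with the center of $G(n)$ once $n\ge 6$ --- so that $\Delta(2,3,n)$, and with it $n$, is an invariant of the abstract group $G(n)$ and not merely of the fibered manifold $S_n$. This is exactly what Milnor's description supplies, together with the centerlessness of the Euclidean and hyperbolic triangle groups, and it is the step that cannot be replaced by elementary manipulation of the Sieradski presentations. Everything else --- the five finite orders, the Fuchsian-group computation of the maximal torsion order, and the free-subgroup argument isolating $n=6$ --- is routine once that foundation is in place.
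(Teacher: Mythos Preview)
Your overall strategy coincides with the paper's: pass from $G(n)$ to $G(n)/Z(G(n))$ via Milnor's description, and read off $n$ from torsion in the quotient. The gap is in the identification of that quotient. It is \emph{not} true that $S_n$ is Seifert fibered over $\St(2,3,n)$ for all $n$, and correspondingly $G(n)/Z(G(n))$ is \emph{not} $\Delta(2,3,n)$ in general. What Milnor actually proves is that $G(n)$ is the commutator subgroup of the centrally extended triangle group $\Gamma(2,3,n)$; modding out the center gives the commutator subgroup $\Delta'(2,3,n)\le\Delta(2,3,n)$, which has index $\gcd(6,n)$. (A quick sanity check: for $n=6$ you yourself note $S_6$ is the Heisenberg manifold, fibered over $T^2$ --- not over $\St(2,3,6)$ --- and $G(6)/Z(G(6))\cong\ZZ^2=\Delta'(2,3,6)$.) So for $\gcd(6,n)>1$ your quotient is too big.

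This matters because the largest torsion order in $\Delta'(2,3,n)$ is $n/\gcd(6,n)$ rather than $n$, and that quantity does \emph{not} determine $n$: for instance $n=7,14,21,42$ all give maximal torsion $7$. The homology separation you relegate to a parenthetical is therefore not a ``convenient first cut'' but an essential step (and there are infinitely many, not finitely many, values of $n$ within each $\gcd(6,n)$--class). The paper's proof does exactly this: it first uses the period-$6$ homology to reduce to $m\equiv\pm n\bmod 6$, computes $G(n)/Z(G(n))\cong\Delta'(2,3,n)$ correctly, and then observes that within a fixed class the maximal torsion $n/\gcd(6,n)$ is strictly increasing in $n$. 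With that correction your argument becomes the paper's argument; your treatment of $n\le 5$ by order and of $n=6$ versus $n\ge 7$ by virtual nilpotence is fine and slightly cleaner than the paper's geometry-type bookkeeping.
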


\begin{proof}
In \cite{Mil75}, Milnor defines the Brieskorn manifold $M(p,q,r)$ to be the
orientable closed 3-manifold obtained by intersecting the complex algebraic surface given by $z_1^p+z_2^q+z_3^r=0$ with the unit sphere given by $\left|z_1\right|^2+\left|z_2\right|^2+\left|z_3\right|^2=1$. Here $p$, $q$, $r$ should be integers at least 2.  Theorem 2.1 of~\cite{Cav98} by Cavicchioli, Hegenbarth, and Kim states that $S_n$ is the Brieskorn manifold $M(2,3,n)$. This follows from the fact that $S_n$ is the $n$-fold cyclic branched cover of $S^3$ branched over the trefoil knot, which is the torus knot
of type $(2,3)$, and Lemma 1.1 of~\cite{Mil75}, which states that the Brieskorn manifold $M(p,q,r)$ is the $r$-fold cyclic branched cover of $S^3$ branched over a torus link of type $(p,q)$.

The first few $n$-fold cyclic covers of $S^3$ branched over the right-hand trefoil knot are discussed in Section 10D of Rolfsen's book~\cite{Rol76}, which begins on page 304.  Here are the results.

\begin{itemize}
\item $n=1$: The manifold $S_1$ is the 3-sphere $S^3$, and so $G_1=1$.
\item $n=2$: The manifold $S_2$ is the lens space $L(3,1)$, so $G_2\cong \Z/3\Z$.
\item $n=3$: The manifold $S_3$ is the spherical 3-manifold with fundamental group $G_3$ the quaternion group of order 8.  It appears in Example~7.2 of~\cite{CFP2}.  This group might be called the binary Klein 4-group.
\item $n=4$: The manifold $S_4$ is the spherical 3-manifold with fundamental group $G_4$ the binary tetrahedral group.
\item $n=5$: The manifold $S_5$ is the spherical 3-manifold with fundamental group $G_5$ the binary icosahedral group.  In other words, this is the Poincar\'{e} homology sphere.
\item $n=6$: The manifold $S_6$ is the Heisenberg manifold.  Here
\[
G_6\cong\left\langle x,y:[x,[x,y]]=[y,[x,y]]=1\right\rangle.
\]
\end{itemize}

In~\cite{Mil75} Milnor proves that $M(2,3,n)$, which we know is homeomorphic to $S_n$, is an $\widetilde{SL}(2,\R)$-manifold for $n\ge 7$. It follows that $G_1,\dotsc,G_6$ are distinct and that they are not $\widetilde{SL}(2,\R)$ manifold groups.  Because of this and Milnor's result that $S_n$ is an $\widetilde{SL}(2,\R)$-manifold for $n\ge 7$, to prove that the groups $G_n$ are distinct, it suffices to prove that the groups $G_n$ are distinct for $n\ge 7$.

As stated on page 304 of Rolfsen's book \cite{Rol76}, for every positive integer $n$ the first homology group $H_1(S_n)$ is $\Z\oplus \Z$, 0, $\Z/3\Z$ or $\Z/2\Z\oplus \Z/2\Z$ when $n\equiv 0,\pm 1,\pm 2 \text{ or }3 \text{ mod } 6$.  So to prove that Sieradski groups $G_m$ and $G_n$ are distinct, we may assume that $m\equiv\pm n\text{ mod } 6$.

For the rest of this section suppose that $n\ge 7$.  In~\cite{Mil75} (see the bottom of page 213 and Lemma 3.1) Milnor proves that $G_n$ is isomorphic to the commutator subgroup of the centrally extended triangle group
$\Zc(2,3,n)=\left\langle\zc_1,\zc_2,\zc_3:\zc_1^2=\zc_2^3=\zc_3^n=\zc_1\zc_2 \zc_3\right\rangle$.

Let $\Zd(2,3,n)=\left\langle\zd_1,\zd_2,\zd_3:\zd_1^2=\zd_2^3=\zd_3^n= \zd_1\zd_2\zd_3=1\right\rangle$, a homomorphic image of $\Zc(2,3,n)$.  The group $\Zd(2,3,n)$ is the group of orientation-preserving elements of the $(2,3,n)$-triangle group. Let $\Zd'(2,3,n)$ denote the commutator
subgroup of $\Zd(2,3,n)$.  We see that  the quotient group $\Zd(2,3,n)/\Zd'(2,3,n)$ is isomorphic to the group generated by the elements $(1,0,0)$, $(0,1,0)$, and $(0,0,1)$ in $\Z^3$ with relations corresponding to a matrix which row reduces as follows.
\begin{equation*}
\left[\begin{matrix}1 & 1 & 1 \\ 2 & 0 & 0 \\ 0 & 3 & 0 \\ 0 & 0 & n
\end{matrix}\right]\longrightarrow
\left[\begin{matrix}1 & 1 & 1 \\ 0 & -2 & -2 \\ 0 & 3 & 0 \\ 0 & 0 & n
\end{matrix}\right]\longrightarrow
\left[\begin{matrix}1 & 1 & 1 \\ 0 & -2 & -2 \\ 0 & 1 & -2 \\ 0 & 0 & n
\end{matrix}\right]\longrightarrow
\left[\begin{matrix}1 & 0 & 3 \\ 0 & 1 & -2 \\ 0 & 0 & 6 \\ 0 & 0 & n
\end{matrix}\right]
\end{equation*}
So $\Zd(2,3,n)/\Zd'(2,3,n)$ is a cyclic group of order $k=\text{GCD}(6,n)$.  This computation also shows that $\zd_1\in \Zd'(2,3,n)$ if and only if $n\not\equiv 0 \text{ mod } 2$, that $\zd_2\in \Zd'(2,3,n)$ if and only if $n\not\equiv 0 \text{ mod } 3$, and that $\zd_3^k$ is the smallest power of $\zd_3$ in $\Zd'(2,3,n)$. In particular $\zd_3^k$ is a nontrivial elliptic element of $\Zd'(2,3,n)$.  Every element of $\Zd'(2,3,n)$ which commutes with $\zd_3^k$ must fix the fixed point of $\zd_3^k$.  It easily follows that the center of $\Zd'(2,3,n)$ is trivial, and in the same way that the center of $\Zd(2,3,n)$ is trivial.

Since the kernel of the homomorphism from $\Zc(2,3,n)$ to $\Zd(2,3,n)$ is generated by the central element $\zc_1\zc_2\zc_3$ and the center of $\Zd(2,3,n)$ is trivial, it follows that the kernel of this homomorphism is the center of $\Zc(2,3,n)$.  So $\Zc(2,3,n)$ modulo its center is isomorphic to $\Zd(2,3,n)$.  Similarly, $G_n$ modulo its center is isomorphic to $\Zd'(2,3,n)$.

Now suppose that $n\equiv\pm 1 \text{ mod } 6$.  Then $G_n$ modulo its center is isomorphic to $\Zd'(2,3,n)= \Zd(2,3,n)$.  The largest order of a torsion element in $\Zd(2,3,n)$ is $n$.  So $G_m$ and $G_n$ are distinct if $m\equiv n\equiv \pm 1 \text{ mod } 6$.  Next suppose that $n\equiv \pm 2 \text{ mod } 6$.  In this case the largest order of a torsion element in $\Zd'(2,3,n)$ is $n/2$.  So $G_m$ and $G_n$ are distinct if $m\equiv n\equiv \pm 2 \text{ mod } 6$.  The same argument is valid if $n\equiv 3 \text{ mod } 6$.  Finally suppose that $n\equiv 0 \text{ mod } 6$.  In this case neither $\zd_1$ nor $\zd_2$ are in $\Zd'(2,3,n)$.  In this case every torsion element in $\Zd'(2,3,n)$ is conjugate to a power of $\zd_3^6$, which has order $n/6$.  Again $G_m$ and $G_n$ are distinct if $m\equiv n\equiv 0 \text{ mod } 6$.
\end{proof}

\section[History]{History}\label{sec-history}

There is a large literature concerning the Fibonacci groups, the Sieradski groups, their generalizations, cyclic presentations of groups, the relationship between cyclic presentations and branched cyclic covers of manifolds, two-bridge knots, and their generalizations. We are incapable of digesting, let alone giving an adequate summary, of this work. We plead forgiveness forhaving omitted important and beautiful work and for misrepresenting work that we have not adequately studied.

\subsection{The Fibonacci groups}  John Conway told the first-named author of this paper that he created the Fibonacci group $F(5)$, with presentation
\[
\langle x_1,\ldots, x_5\mid x_1x_2=x_3, x_2x_3=x_4, x_3x_4=x_5, x_4x_5=x_1, x_5x_1=x_2\rangle
\]
and asked that his graduate students calculate its structure as an exercise to demonstrate that it is not easy to read the structure of a group from a group presentation. For example, our straightforward coset enumeration program creates 4 layers and more than 200 vertices before the coset graph collapses to its final 11 elements. Conway presented the calculation as a problem in~\cite{Con65}. The definition was immediately generalized to give the group $F(n)$. Coset enumeration showed that $F(n)$ is finite for $n < 6$ and for $n = 7$. The Cayley graph for group $F(6)$ can be constructed systematically and recognized as a $3$-dimensional infinite Euclidean group. Roger Lyndon proved, using small cancellation theory, that $F(n)$ is infinite if $n \ge 11$ (unpublished). A.~M.~Brunner~\cite{Bru74} proved that $F(8)$ and $F(10)$ are infinite. George Havas, J.~S.~Richardson, and Leon S.~Sterling showed that $F(9)$ has a quotient of order $152\cdot 5^{18}$, and, finally, M.~F.~Newman~\cite{New90} proved that $F(9)$ is infinite. Derek F.~ Holt later reported a proof by computer that $F(9)$ is automatic, from which it could be seen directly from the word-acceptor that the generators have infinite order.

At the International Congress in Helsinki (1978), Bill Thurston was advertising the problem (eventually solved by Misha Gromov) of proving that a group of polynomial growth has a nilpotent subgroup of finite index. The first-named author brought up the example of $F(6)$ as such a group.  Thurston immediately recognized the group as a branched cyclic cover of $\Sth$, branched over the figure-eight knot. And before our dinner of reindeer steaks was over, Thurston had conjectured that the even-numbered Fibonacci groups were probably also branched cyclic covers of $\Sth$, branched over the figure-eight knot. This conjecture was verified by H.~M.~Hilden, M.~T.~Lozano, and J.~M.~Montesinos-Amilibia~\cite{Hil90} and by H.~Helling, A.~C.~Kim, and J.~L.~Mennicke~\cite{Hel98}. C.~Maclachlan~\cite{Mac95} proved that, for odd $n$, the group $F(n)$ is not a fundamental group of a hyperbolic $3$-orbifold of finite volume.

\subsection{Sieradski manifolds}
The Sieradski manifolds have a similar rich history, but not one we know as well. As examples, they were introduced by A.~Sieradski in 1986~\cite{Sie86}. Sieradski used the same faceted $3$-ball that we employ, though his face pairings were different. Richard M. Thomas~\cite{Tho91b} shows that the Sieradski groups, which he calls $G(n)$, are infinite if and only if $n\ge 6$ and that $G(6)$ is metabelian. A.~Cavicchioli, F.~Hegenbarth, and A.~C.~Kim~\cite{Cav98} show that the Sieradski manifolds are branched over the trefoil knot.

\subsection{Cyclic presentations}
Cyclic presentations are particularly interesting because of their connections with branched cyclic coverings of $3$-manifolds. Fundamental results about cyclic presentations appear in the book \emph{Presentations of Groups} by
D.~L.~Johnson \cite[Chapter 16]{Joh76}. Arye Juh\'asz~\cite{Juh07} considers the question of when cyclically presented groups are finite. Andrzej Szczepa\'nski and Andrei Vesnin \cite{Szc00} ask which cyclically presented groups can be groups of hyperbolic $3$-orbifolds of finite volume and which cannot. Alberto Cavicchioli and Fulvia Spaggiari~\cite{Cav06} show that non-isomorphic cyclically presented groups can have the same polynomial.

\subsection{Dunwoody manifolds}
M.~J.~Dunwoody \cite{Dun95} managed to enumerate, with parameters, a large class of $3$-manifolds admitting Heegaard splittings with cyclic symmetry. The fundamental groups were all cyclically presented. He observed that the polynomials associated with the cyclic presentations were Alexander polynomials of knots and asked whether the spaces were in fact branched cyclic covers of $\Sth$, branched over knots or links. Alberto Cavicchioli, Friedrich Hegenbarth, and Ann Chi Kim \cite{Cav99} showed that the Dunwoody manifolds included branched covers with singularities that were torus knots of specific type. L.~Grasselli and M.~Mulazzani~\cite{Gra01} showed that Dunwoody manifolds are cyclic coverings of lens spaces branched over $(1,1)$-knots. Cavicchiolo, Beatrice Ruini, and Fulvia Spaggiari~\cite{Cav01} proved the Dunwoody conjecture that the Dunwoody manifolds are $n$-fold cyclic coverings branched over knots or links. Soo Hwan Kim and Yangkok Kim \cite{Kim04} determined the Dunwoody parameters explicity for a family of cyclically presented groups that are the $n$-fold cyclic coverings branched over certain torus knots and certain two-bridge knots. Nurullah Ankaralioglu and Huseyin Aydin~\cite{Ank08} identified certain of the Dunwoody parameters with
generalized Sieradski groups.

\subsection{Two-bridge knots}
The first general presentation about the branched cyclic coverings of the two-bridge knots seems to be that of Jerome Minkus in 1982 \cite{Min82}. A very nice presentation appears in~\cite{Cav99b} where cyclic presentations are developed that correspond to cyclically symmetric Heegaard decompositions. The
authors Alberto Cavicchioli, Beatrice Ruini, and Fulvia Spaggiari show that the polynomial of the presentation is the Alexander polynomial. They use the very clever and efficient RR descriptions of the Heegaard decompositions. They pass from the Heegaard decompositions to face-pairings and determine many of the geometric structures. Michele Mulazzani and Andrei Vesnin~\cite{Mul01} exhibit the many ways cyclic branched coverings can be viewed: polyhedral, Heegaard, Dehn surgery, colored graph constructions.

In addition to these very general presentations, there are a number of concrete special cases in the literature: \cite{Ble88, Kim98, Kim00, Kim03, Kim04, Jeo06, Jeo08, Gra09b, Tel10}.

Significant progress has been made beyond the two-bridge knots. C.~Maclachlan and A.~Reid \cite{Mac97} and  A.~Yu.~Vesnin and A.~Ch.~Kim \cite{Ves98b}  consider $2$-fold branched covers over certain $3$-braids. Alexander Mednykh and Andrei Vesnin \cite{Med95} consider $2$-fold branched covers over Turk's head links. 

Alessia Cattabriga and Michele Mulazzani \cite{Mul03, Cat03} develop strongly-cyclic branched coverings with cyclic presentations over the class of $(1,1)$ knots, which includes all of the two-bridge knots as well as many knots in lens spaces. P.~Cristofori, M. Mulazzani, and A. Vesnin \cite{Cri07} describe strongly-cyclic branched coverings of knots via $(g,1)$-decompositions. Every knot admits such a description.

\end{document}